\newcommand{\D}{\mathrm{d}}
\newcommand{\lb}{\left(}
\newcommand{\rb}{\right)}
\newcommand{\PD}{\partial}
\newcommand{\Sb}{\mathbb{S}}
\newcommand{\Beq}{\begin{equation}}
	\newcommand{\Eeq}{\end{equation}}
\newcommand{\beq}{\begin{equation*}}
	\newcommand{\eeq}{\end{equation*}}
\newcommand{\bal}{\begin{align}}
	\newcommand{\eal}{\end{align}}
\newcommand{\bp}{\begin{prob}}
	\newcommand{\bpr}{\begin{proof}}
		\newcommand{\epr}{\end{proof}}
	\newcommand{\bel}[1]{\begin{equation}\label{#1}}
		\newcommand{\ee}{\end{equation}}
	\newtheorem{theorem}{Theorem}[section]
	\newtheorem{corollary}[theorem]{Corollary}
	\newtheorem{lemma}[theorem]{Lemma}
	\newtheorem{proposition}[theorem]{Proposition}
	\theoremstyle{definition}
	\newtheorem{remark}[theorem]{Remark}
	\newcommand{\R}{{\mathbb R}}
	\newcommand{\C}{{\mathbb C}}
	\newcommand{\be}{\begin{eqnarray}}
		\newcommand{\ben}{\begin{eqnarray*}}
			\newcommand{\en}{\end{eqnarray}}
		\newcommand{\enn}{\end{eqnarray*}}
	\definecolor{rot}{rgb}{1.000,0.000,0.000}
	\definecolor{blue}{rgb}{0.000, 0.000, 1.000}
	\title[Inverse acoustic scattering with single incoming wave]{Uniqueness to inverse acoustic scattering from coated polygonal obstacles with a single incoming wave}
	\author[Hu and Vashisth]{GuangHui Hu$^\dagger$ and Manmohan Vashisth$^{\ast}$}
	\address{$^\dagger$ Beijing Computational Science Research Center, Beijing 100193, China.
		\newline
		\indent E-mail:{\tt \ hu@csrc.ac.cn}}
	\address{$^\ast$ Beijing Computational Science Research Center, Beijing 100193, China.
		\newline
		\indent E-mail:{\tt\  mvashisth@csrc.ac.cn}}
\begin{document}
		%\renewcommand{\theequation}{\arabic{section}.\arabic{equation}}
		%\begin{titlepage}
		%\title{\bf Inverse problems for wave and schr\"odinger equations with a single dynamical data}
		%
		%\author{(January 16, 2019)}
		
		%\author{
		% Guanghui Hu\thanks{Beijing Computational Sciences Research Center (CSRC), 100193 Beijing, China ({\tt
		%hu@csrc.ac.cn})} \quad
		%and Jun Zou
		%\thanks{Department of Mathematics, The Chinese University of Hong Kong, Shatin, N. T., Hong Kong ({\tt zou@math.cuhk.edu.hk}).}}
		
		%\date{}
		%\end{titlepage}
		\maketitle
		\begin{abstract}
			It is proved that a connected polygonal obstacle coated by thin layers together with its surface impedance function can be determined uniquely from the far field pattern of a single incident plane wave.
			Our proof is based on 
			the Schwarz reflection principle for the Helmholtz equation satisfying the impedance boundary condition on a flat boundary.
		\end{abstract}
		
		\indent
		\textbf{\small Keywords:} {\small Uniqueness, inverse acoustic scattering, impedance boundary condition, reflection principle.}

		%\medskip
		%
		%${}\qquad$ \textbf{\small Mathematics Subject Classifications (2010):
		%}
		%\maketitle
		%\vspace{.2in}
		%\begin{abstract}
		%\vspace{.2in} {\bf Keywords}:   \end{abstract}
		\section{Introduction and main results}
		Let $D\subset \R^{2}$ be a coated obstacle by a thin dielectric layer, which is embedded in an infinite homogeneous medium. In this paper, $D$ is supposed to be a bounded connected polygon such that its exterior $D^c$ is connected.
		Consider the time-harmonic acoustic scattering of a plane wave $u^{in}(x)=e^{ikx\cdot d}$ from the impenetrable scatterer $D$ modelled by the following system of equations
		\begin{align}
			&\label{Helmholtz scattering}\Delta u +k^{2}u=0\qquad \mbox{in}\quad D^c:=\R^{2}\backslash\overline{D},\\
			&\label{Totla field} u(x)=e^{ikx\cdot d}+u^{sc}(x),\quad x, d\in \R^2,\\
			&\label{Sommerfeld radiation condition} \lim_{r\rightarrow \infty}\sqrt{r}\lb\PD_r u^{sc}-ik u^{sc}\rb =0,\quad r=\lvert x\rvert,\\
			&\label{Boundary condition} \PD_\nu u +i\lambda u=0\qquad \mbox{on}\ \PD D,
		\end{align}
		where $k>0$ is the wave number, $d\in\Sb:=\{x\in\R^{2}:\ \lvert x\rvert =1\}$ is the incident direction, and $u^{sc}$ is the  scattered field.  The normal direction $\nu\in \Sb$ is supposed to be outward.
		Equation  \eqref{Sommerfeld radiation condition} is known  as the Sommerfeld radiation condition and the impedance coefficient $\lambda>0$ is supposed to be constant. It is well known (see \cite{Cakoni,Cakoni_Colton_Monk,Colton_Kress_Book})  that there  exists a unique solution $u^{sc}$ to the system of Equations \eqref{Helmholtz scattering}-\eqref{Boundary condition} such that $u^{sc}\in H^{1}_{loc}(D^c)$ if $\PD D$ is Lipschitz. Moreover, the  Sommerfeld radiation condition of $u^{sc}$ leads to the asymptotic expansion
		\begin{align}\label{Asymptotic expansion}
			u^{sc}(x)=\frac{e^{ikr}}{\sqrt{r}}\lb u^{\infty}\lb\hat{x}\rb +O\lb \frac{1}{r}\rb\rb,\qquad \hat{x}:=x/|x|,
		\end{align}
		uniformly in all directions $\hat{x}\in \Sb$,
		where the function $u^{\infty}$ defined on unit circle is known as the  far field pattern. Inverse scattering problem is to determine the shape and position of the scatterer $D$ together with the impedance coefficient $\lambda$ from  knowledge of the far field pattern $u^{\infty}$. In this paper we prove the following uniqueness theorem with a single incoming wave.
		\begin{theorem}\label{Main theorem}
			Assume that $(D_{1}, \lambda_1)$ and $(D_{2},\lambda_2)$ are two connected polygonal obstacles. Let $u_{j}^{\infty}$ for $j=1,2$, denote the far-field patterns of the scattering problem \eqref{Helmholtz scattering}-\eqref{Boundary condition} with the fixed incident direction $d\in\Sb$ when $D=D_{j}$ and $\lambda=\lambda_j$, respectively. Then
			the relation $u_{1}^{\infty}(\widehat{x})=u^{\infty}_{2}(\widehat{x})$
			over all observation directions $\widehat{x}\in\Sb$ implies that $D_1=D_2$ and $\lambda_1=\lambda_2$.
		\end{theorem}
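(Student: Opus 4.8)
The plan is to argue by contradiction, combining Rellich's lemma with a local analysis at a corner in which the impedance reflection principle does the decisive work. First I would invoke Rellich's lemma: the hypothesis $u_1^\infty(\widehat x)=u_2^\infty(\widehat x)$ for all $\widehat x\in\Sb$ forces the two scattered fields to coincide on the unbounded connected component $G$ of $\R^{2}\setminus\overline{D_1\cup D_2}$, and since the incident field $e^{ikx\cdot d}$ is common, $u_1=u_2=:u$ throughout $G$. Assuming $D_1\neq D_2$, I would derive a contradiction; the equality of the impedance coefficients will then follow from a short separate argument once $D_1=D_2$ has been established.

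To localize the contradiction I would exploit that $D_1$ and $D_2$ are polygons with $D_1\neq D_2$: there must exist a vertex $O$, say of $\partial D_1$, lying on $\partial G$ with $O\notin\overline{D_2}$. On a small disk $B(O,\rho)$ the field $u_2$ then solves the Helmholtz equation throughout (no portion of $\partial D_2$ meets $B(O,\rho)$), while on the two edge segments of $\partial D_1$ emanating from $O$ and bounding $G$ the identity $u=u_2$ together with \eqref{Boundary condition} gives $\partial_\nu u_2+i\lambda_1 u_2=0$. Thus $u_2$ is a real-analytic Helmholtz solution on $B(O,\rho)$ obeying the impedance condition on two rays meeting at $O$.

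The key step is the impedance reflection principle. Because the impedance condition ties the normal derivative to the trace, $\partial_\nu u_2=-i\lambda_1 u_2$ on each edge, the Cauchy data of $u_2$ along each edge are analytically determined, and $u_2$ continues across the flat edge as a Helmholtz solution --- the Robin analogue of Schwarz reflection. Expanding $u_2$ about $O$ in the convergent series $u_2(r,\varphi)=\sum_{n\in\Z}a_n J_n(kr)e^{in\varphi}$ and substituting into the two impedance relations (one per edge) yields, after matching powers of $r$, an overdetermined homogeneous linear system for the $a_n$. I expect this to force $a_n=0$ for all $n$, hence $u_2\equiv 0$ on $B(O,\rho)$; unique continuation then gives $u\equiv 0$ in $G$, so $u^{sc}=-e^{ikx\cdot d}$ there, which is incompatible with the Sommerfeld radiation condition \eqref{Sommerfeld radiation condition}. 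This contradiction yields $D_1=D_2=:D$. With $D_1=D_2$, the common field satisfies both $\partial_\nu u+i\lambda_1 u=0$ and $\partial_\nu u+i\lambda_2 u=0$ on $\partial D$; subtracting gives $(\lambda_1-\lambda_2)u=0$ on $\partial D$, so if $\lambda_1\neq\lambda_2$ then $u\equiv 0$ on an edge, and with $\partial_\nu u=0$ there the same reflection and unique-continuation argument forces $u\equiv 0$, again contradicting \eqref{Sommerfeld radiation condition}. Hence $\lambda_1=\lambda_2$.

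The main obstacle, I expect, is the rigorous formulation and proof of the impedance reflection principle and the extraction of the contradiction from it. Unlike the Dirichlet (odd) and Neumann (even) cases, the Robin condition does not reflect to the same equation by a simple symmetry, so the continuation across an edge must be constructed as a genuine $H^1$ solution satisfying the correct transmission relations, and the resulting corner expansion must be analyzed carefully to conclude the vanishing of every modal coefficient. Care is also needed to guarantee the existence of the protruding vertex $O$ from the hypothesis $D_1\neq D_2$, and to justify the boundary regularity of $u$ up to the edges of $\partial D_1$ that lie in $G$ so that the impedance condition is meaningful there.
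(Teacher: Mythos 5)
Your opening (Rellich's lemma, $u_1=u_2$ in the unbounded component) and your closing argument for $\lambda_1=\lambda_2$ match the paper, but the core of your proposal has two genuine gaps. First, the decisive local step is false as stated: a nonzero Helmholtz solution in a disk \emph{can} satisfy the impedance condition on two segments meeting at a corner. Indeed, if the two edges have unit normals $\nu_1,\nu_2$ and there exists $d^*\in\Sb$ with $\nu_1\cdot d^*=\nu_2\cdot d^*=-\lambda/k$ (which happens for suitable corner angles, e.g.\ edges symmetric about an axis with half-angle $\arccos(\lambda/k)$), then $u=e^{ikx\cdot d^*}$ solves the Helmholtz equation everywhere and satisfies $\partial_\nu u+i\lambda u=0$ on both edges. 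Hence your Fourier--Bessel system $\sum_n a_nJ_n(kr)e^{in\varphi}$ subjected to the two Robin relations does \emph{not} force $a_n=0$ for all $n$, and no purely local corner analysis can yield $u_2\equiv 0$. This is precisely why the paper argues globally: in its Case (i) it uses the connectedness of $D_2$ to see that the two edges extend to \emph{full half-lines} in $D_2^c$ bounding an infinite sector, applies the iterated non-local reflection of Lemma \ref{Lemma about non-existence of non-parallel lines} to extend $u_2$ to all of $\R^2$, concludes $u_2^{sc}\equiv 0$ from the radiation condition, and only then derives the contradiction from the Robin condition holding on \emph{all} of $\partial D_2$ via (\ref{eq:plane}) --- which needs three normal directions with two linearly independent differences, exactly because two directions are compatible with a plane wave, as the counterexample above shows. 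Relatedly, your justification of the ``Robin analogue of Schwarz reflection'' (``the Cauchy data are analytically determined'') is not a proof: knowing $\partial_\nu u_2=-i\lambda u_2$ on an edge does not determine either trace, and the paper stresses that no point-to-point reflection exists for the Robin condition; the continuation is a non-local integral operator (Theorem \ref{Reflection principle for Helmholtz equation}) whose construction and domain of validity are the technical heart of the paper. (In your corner setting the reflection is in fact unnecessary locally, since $B(O,\rho)\subset D_2^c$ already makes $u_2$ real-analytic on the whole disk; what you are missing is the global extension, not the local one.)

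Second, your localization step --- ``there must exist a vertex $O$ of $\partial D_1$ on $\partial G$ with $O\notin\overline{D_2}$'' --- does not follow from $D_1\neq D_2$. For example, if $D_2\subsetneq D_1$ is obtained by cutting a notch into $D_1$, every vertex of $D_1$ can also be a vertex of $D_2$, and the only accessible structure is a finite segment of $\partial D_1$ (a Robin level set of $u_2$) crossing the gap domain, with both endpoints on $\partial D_2$. This is the paper's Case (ii), which your proposal does not address at all, and it is the genuinely hard case for non-convex polygons: the paper handles it by iteratively reflecting the gap polygon $\Omega_0$ across successive Robin segments with the non-local extension operators $\mathcal{D}_n$, following a path $\gamma(t)$ to infinity (Lemma \ref{lem:3.6} and the path argument), until a configuration is reached from which $u_2$ extends to all of $\R^2$, leading to the same contradiction as in Case (i). Without an argument for this case, and with the local corner contradiction refuted above, the proposal as written does not prove the theorem; the correct repair is to replace the local modal analysis by the global extension machinery of Sections \ref{Harmonic reflection}--\ref{Helmholtz reflection}.
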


		As a by-product of the uniqueness proof to Theorem \ref{Main theorem}, we can obtain
		\begin{theorem}\label{Th2} {Let $\lambda=\lambda(x)$ $(x\in \R^2)$ be an unknown function which can be continued to an entire function on $\C^2$ such that $\lambda\geq 0$ on $\partial D$}. Then the far-field pattern $u^{\infty}(\hat{x})$ for all $\hat{x}\in \Sb$ with one incident direction uniquely determines $\partial D$ and {$\lambda(x)|_{\partial D}$}. %Moreover, the total field cannot be analytically extended into $D$ across any corner point lying on the convex hull of $D$.
		\end{theorem}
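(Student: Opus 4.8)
The plan is to leverage the uniqueness result of Theorem~\ref{Main theorem} together with the far-field equality $u_1^\infty = u_2^\infty$, but now with an impedance that is an entire function of $x$ restricted to the boundary. I would argue by contradiction, supposing that either $\partial D_1 \neq \partial D_2$ or that the impedance functions disagree on a common portion of the boundary, and show that each possibility forces a contradiction by exploiting the reflection principle established earlier for the Helmholtz equation under the impedance boundary condition on a flat segment.

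First I would invoke Rellich's lemma: the identity $u_1^\infty(\hat x) = u_2^\infty(\hat x)$ for all $\hat x \in \Sb$ implies $u_1^{sc} = u_2^{sc}$, hence $u_1 = u_2$, in the unbounded connected component of $D_1^c \cap D_2^c$. If $D_1 \neq D_2$, then (after possibly relabeling) there is an edge of one polygon, say a flat piece $\Gamma$ of $\partial D_1$, that lies in the exterior of $D_2$ and is exposed to the common domain. On this segment the total field $u := u_1 = u_2$ is a solution of the Helmholtz equation in a neighborhood of $\Gamma$ that simultaneously satisfies $\partial_\nu u + i\lambda_1 u = 0$ on $\Gamma$ (coming from $D_1$) while being a genuine interior-regular solution from the $D_2$ side. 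The key step is then to apply the reflection principle across $\Gamma$: because $\lambda_1(x)$ is the restriction to $\partial D_1$ of a function that extends to an entire function on $\C^2$, the reflected field is well-defined and the extended $u$ solves the Helmholtz equation in a full neighborhood of $\Gamma$, now with real-analytic coefficients. This analytic continuation, propagated around the corners of the polygon exactly as in the proof of Theorem~\ref{Main theorem}, contradicts the local singular behavior forced at the vertices, yielding $\partial D_1 = \partial D_2$.

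Once $\partial D := \partial D_1 = \partial D_2$ is established, it remains to recover $\lambda|_{\partial D}$. Here I would use that $u = u_1 = u_2$ on $\partial D$ must satisfy both boundary conditions,
\beq
\PD_\nu u + i\lambda_1 u = 0 \quad\text{and}\quad \PD_\nu u + i\lambda_2 u = 0 \qquad \text{on } \PD D,
\eeq
so that $i(\lambda_1 - \lambda_2)\, u = 0$ on $\partial D$. If $u$ does not vanish identically on any edge, subtracting the two relations forces $\lambda_1 = \lambda_2$ pointwise wherever $u \neq 0$; since $u$ cannot vanish on an open subset of an edge (otherwise the Cauchy data $u = \PD_\nu u = 0$ there would force $u\equiv 0$ in $D^c$ by unique continuation, contradicting the nontriviality of the incident field), the set $\{u \neq 0\}$ is dense in $\partial D$. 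Because $\lambda_1(x)-\lambda_2(x)$ is the restriction of an entire function and vanishes on this dense set, it vanishes identically, giving $\lambda_1|_{\partial D} = \lambda_2|_{\partial D}$.

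The main obstacle I anticipate is the reflection/continuation step: one must verify that the reflection principle for the impedance condition genuinely applies when $\lambda$ is a non-constant entire function rather than a constant, i.e. that the reflected solution still solves a Helmholtz-type equation with analytic coefficients across the flat edge, and that this analytic continuation can be propagated consistently around the vertices to produce the contradiction. The constancy of $\lambda$ in Theorem~\ref{Main theorem} is used precisely to make the reflected equation clean; relaxing to an entire $\lambda$ requires checking that the reflection formula, which involves $\lambda$ evaluated at reflected points, remains globally analytic, which is exactly where the entire-extension hypothesis is essential.
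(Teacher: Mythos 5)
Your route is essentially the paper's own. For the shape, you use Rellich's lemma to get $u_1=u_2$ in the unbounded component and then adapt the reflection principle to a variable impedance; the paper does exactly this, observing that for holomorphic $\lambda$ one simply uses Proposition \ref{Reflection harmonic general case} with $\alpha\equiv 1$, $\beta=i\lambda(z,\omega)$ and $S(\tau)=\widetilde{S}(\tau)=\tau$ (the curve being a straight line), after which Theorem \ref{Reflection principle for Helmholtz equation}, Lemma \ref{Lemma about non-existence of non-parallel lines} and Lemma \ref{lem:3.6} carry over and the path argument of Theorem \ref{Main theorem} runs unchanged --- so the ``main obstacle'' you flag is handled in the paper by a one-line modification of the function $V$ in \eqref{eH}, not by any new analysis. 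For the impedance, your argument (subtract the two boundary conditions to get $(\lambda_1-\lambda_2)u=0$ on $\partial D$, rule out vanishing of $u$ on an open subboundary via the Cauchy data and Holmgren's theorem) is the paper's proof stated contrapositively; note that continuity of $\lambda_1-\lambda_2$ already suffices for the last step, so invoking entirety there is unnecessary.

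The one genuine gap is your endgame for the shape part. You assert that the global analytic continuation ``contradicts the local singular behavior forced at the vertices,'' but in this paper the non-analyticity of the total field at corners is itself a \emph{corollary} of the uniqueness proof (see the second remark at the end of the paper), so as stated your step is circular. The paper's actual contradiction is different: once $u_2$ extends to all of $\R^2$, the scattered field $u_2^{sc}$ is an entire radiating solution, hence $u_2^{sc}\equiv 0$ by Rellich's lemma, and therefore the plane wave $e^{ikx\cdot d}$ alone must satisfy $\partial_\nu u+i\lambda(x)u=0$ on $\partial D_2$, i.e. $\lambda(x)=-k\,\nu(x)\cdot d$ on each side. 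Ruling this out is precisely where the hypothesis $\lambda\geq 0$ --- which your proposal never uses --- enters: since $\int_{\partial D_2}\nu\,\D s=0$ and a closed polygon has at least three distinct normal directions, one has $\nu\cdot d>0$ on some side, forcing $\lambda<0$ there (equivalently, the paper's final remark shows by integration by parts that no nontrivial incident wave can satisfy the Robin condition on all of $\partial D$ when $\lambda\geq 0$). Without this, or some substitute such as exploiting continuity of $\lambda$ at the corners together with the three-normals argument of \eqref{eq:plane}, the continuation step alone yields no contradiction and the shape part of your proof does not close.
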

		
		The unique identification of an obstacle from knowledge of far-field pattern goes back to Schiffer \cite{Lax} in 1967, where he proved that a sound-soft obstacle can be uniquely recovered using infinite number of plane waves with distinct directions. In \cite{CS1983}, Colton and Sleeman proved the same uniqueness with a finite number of plane waves, provided a priori information on the size of the sound-soft obstacle is available. In particular, one plane wave is enough if the underlying obstacle is sufficiently small compared to the wave length; see also \cite{Gintides, Ikehata} for related discussions on the bound. %, and two sufficiently closed sound-soft obstacles cannot general the same far field pattern \cite{}.
		Global uniqueness results with a single wave were obtained within the class of sound-soft/sound-hard/penetrable polygonal and polyhedral scatterers \cite{CY,Rondi05,LHZ06,Elshner_Yamamoto_polygonal,EY08,ElHu2018acoustic,
			ElHu2015,HSV}. In addition, sound-soft balls/disks can be uniquely determined by a single far-field pattern of a plane wave or point source wave \cite{Liu, HL14}.
		% It is still open the unique determination of a general obstacle with one plane wave.
		
		For coated obstacles,
		Isakov \cite[Theorem 2.4]{Isakov} proved that far-field patterns excited at multi frequencies uniquely determine the shape as well as the impedance function. Two plane waves with different directions were used in \cite{Cheng2003} to determine a convex-polygonal obstacle, which were recently relaxed to a single wave in \cite{HMY}. Determination of non-convex coated polygons was discussed early in \cite{LHZ07}. However, the proof of \cite[Theorem 2.1, Page 302]{LHZ07} contains a serious gap in using normal directions of the gap domain between two different obstacles. Hence, the unique determination of a non-convex polygon of impedance type from a single far field pattern also remains open up to now,  which is the aim of this paper.
		
		We remark that the idea of using the Schwarz reflection principle \cite{CY,Rondi05,Elshner_Yamamoto_polygonal,EY08,LHZ06} for recovering sound-soft/sound-hard polygonal obstacles cannot be carried over to the impedance case straightforwardly. This is due to the fact that the \textquoteleft point-to-point\textquoteright \ reflection principle for the Helmholtz equation subject to the Dirichlet/Neumann boundary condition on a flat surface is no longer valid under the Robin boundary condition.
		In \cite{Diaz_Ludford-Reflection principle}, Diaz and Ludford established a \textquoteleft non-point-to-point' reflection principle
		under the impedance  boundary conditions given on a subset of a hyperplane
		if the domain of the Helmholtz equation fulfills an additional geometric condition in $\R^n$ ($n\geq 2$) (see also Remark \ref{remark2.3} (ii)).
		%Diaz and Ludford in \cite{Diaz_Ludford-Reflection principle} studied the reflection principle for the general second order elliptic partial differential equation with constant coefficients under the impedance  boundary conditions given on a subset of a hyperplane and they derived a point to point  reflection formula for the solution to the Helmholtz equation when  coefficients of the boundary operator are constants.
		Such kind of reflection principle turns out to be sufficient in uniquely determining  coated
		convex-polygonal obstacles with a constant surface impedance; see \cite{HMY}.
		
		Our arguments for treating non-convex polygons essentially consist of two ingredients. (i) A non-local extension formula for the two-dimensional Helmhotlz equation which applies to general connected domains with the Robin boundary condition enforcing on a flat subboundary (see Section \ref{Helmholtz reflection}). This is mostly motivated by the reflection principles for the harmonic (resp. Helmholtz) equation subject to the Robin (resp. Dirichlet and Neumann) boundary condition(s) on a real-analytic subboundary \cite{Belinskiy_Savina_Schwartz_Harmonic_Robin,
			Savina_Sternin_Shatalov_Reflection_Helmholtz,
			Savina_Helmholtz_Neumann, Savina_general_elliptic_Transaction_AMS}. (ii) A novel path argument for applying
		\textquoteleft non-point-to-point\textquoteright \ reflection principles to prove the analytical extension of wave fields in polygonal domains.
		%to inverse scattering from impenetrable scatterers with piecewise linear boundary. %We adapt the approach of \cite{Elschner_Guang-hui_Inverse_elastic_scattering} in elastic scattering to the Robin boundary value problem of the Helmholtz equation.
		Our method for proving Theorem \ref{Main theorem} is inspired by the uniqueness proof in inverse conductivity problems with a single measurement \cite{FI89} and the path argument proposed in \cite{Elschner_Guang-hui_Inverse_elastic_scattering} where the 'non-point-to-point' reflection principle for the Navier equation was applied.
		This paper also provides new uniqueness proofs in determining sound-soft/sound-hard polygonal obstacles with a single incoming wave. In particular, Theorems \ref{Main theorem} and \ref{Th2} remain valid when the surface impedance function is piecewise holomorphic (for instance, piecewise constant) and
		under mixed Dirichlet, Neumann and Robin boundary conditions; see Remark \ref{rem} at the end of the paper.

		% The work of \cite{Diaz_Ludford-Reflection principle}  has been extended in  \cite{Belinskiy_Savina_Schwartz_Harmonic_Robin} for the Harmonic functions when the  impedance  boundary operator with non-constant coefficients is given on a real analytic curve.
		%The reflection principle for more general elliptic operators have been studied in \cite{Duffin_Analytic_Continuation,Elschner_Guang-hui_Inverse_elastic_scattering,Garabedian,John_Continuation_Relection}.
		
		The remaining part of this paper is organized as follows. In the subsequent two Sections, we recall the
		reflection principle for the Laplacian equation and derive the extension formula for the Helmholtz equation satisfying the Robin boundary condition on a flat subboundary. The proofs of Theorems \ref{Main theorem} and \ref{Th2} will be carried out in Section \ref{sec:4}.

		\section{Reflection principle for Laplacian equation}\label{Harmonic reflection}
		Consider a non-singular real-analytic curve  $\Gamma\subset\R^{2}$ given by $\Gamma:=\{x=(x_{1},x_{2})\in\R^{2}: \ f(x)=0,\,  \nabla f(x)\neq 0\}$ where $f$ is an algebraic function defined on $\R^{2}$ (which means that $f$ is polynomial in $x_1$ and $x_2$ with real coefficients).
		Below we define the mapping $R_{\Gamma}$ with respect to $\Gamma$ by the \emph{Schwarz function} of $\Gamma$ \cite{Schwarz}.  Let $U\subset\R^{2}$ be an open connected set separated into two parts $U^+$ and $U^-$ by $\Gamma$. %i.e. $(x_{1},x_{2})\in U$ if and only if $R_{\Gamma}(x_{1},x_{2})\in U$.
		Now consider a complex domain $V\subset\C^{2}$ such that $V\cap \R^{2}=U$. Using the the bicharacteristic coordinates $z=x_{1}+ix_{2}$, $\omega=x_{1}-ix_{2}$, we obtain the equation of the complexified curve $\Gamma_{\C}$ in the form
		\begin{equation}\label{Complexified curve equation}
			\Gamma_{\C}:=\bigg\{(z, \omega)\in \C^2: f\lb \frac{z+\omega}{2},\frac{z-\omega}{2i}\rb=0\bigg\}.
		\end{equation}
		Note that $\Gamma_{\C}$ coincides with $\Gamma$ on those points such that $z=\overline{\omega}$, where the bar denotes the complex conjugate.
		Since $\nabla f\neq 0$ on $\Gamma$, the above Equation \eqref{Complexified curve equation} is uniquelly solvable for $z$ and $w $ in a neighborhood of $\Gamma$ in $\C^2$.
		We denote the corresponding solutions by $\omega=S(z) \ \mbox{and}\ z=\widetilde{S}(\omega)$. Here  the function $S(z)$ is called the {\it{Schwarz function}} of the curve $\Gamma$ and $z=\widetilde{S}(\omega)$ represent the inverse of $S(z)$. The  mapping $R_\Gamma$ is given by the formula (see \cite{Davis79})
		\ben
		R_\Gamma(x)=R_\Gamma(z):=\{y=(y_1,y_2)\in \R^2: y_1+iy_2=\overline{S(z)}\}.
		\enn
		It is well known that $R_\Gamma: U\rightarrow U$ is a conformal mapping permuting $U^+$ and $U^-$ (that is, if $x\in U^\pm$, then $R_\Gamma(x)\in U^\mp$). For a geometric interpretation of $R_\Gamma$ we refer to \cite{Study, Savina_Sternin_Shatalov_Reflection_Helmholtz}.
		With these notations, we state the reflection principle for harmonic functions with the Robin boundary condition given on an {algebraic} curve, which was verified in \cite{Belinskiy_Savina_Schwartz_Harmonic_Robin}. For notational simplicity we write $\partial_j w(x)=\partial_{x_j} w(x)$ for $x=(x_1, x_2)\in \R^2$.
		\begin{proposition}
			\cite{Belinskiy_Savina_Schwartz_Harmonic_Robin}
			\label{Reflection harmonic general case}
			Let $\gamma\subset \Gamma$ be a subset of $\Gamma$ and let $U=U^+\cup\gamma\cup U^-$ be defined as above with $\gamma:=\Gamma\cap U$. %Let
			%$U^{+}$ and $U^{-}$ denote the two symmetric parts into which $U\setminus{\gamma}$ is divided by $\Gamma$.
			%Also assume that $U^{\pm}$ are such that for any pair of points $P\in\Omega^{\pm}$ and $Q\in\gamma$ there exist a real analytic path joining $P$,$Q$ and  lies completely in $\Omega^{\pm}$. Now
			Let  $w(x)$ be a solution to
			\begin{align}\label{Harmonic general case}
				\Delta w(x)=0\ \mbox{in}\ U^{+},\quad
				\alpha(x)\PD_{\nu}w+\beta(x)w=0\ \mbox{on}\ \gamma,
			\end{align}
			where $\alpha,\beta$ are holomorphic functions not vanishing simultaneously and $\PD_{\nu}$ denotes the unit normal to $\gamma$ pointing into $U^{-}$. Then $w$ can be extended from $U^+$ to $U$ by defining $\widetilde{w}= w$ in $U^+$ and
			\be\nonumber
			\widetilde{w}(x)
			&:=&w(R_{\Gamma}\lb x\rb)+\frac{1}{2i}\int\limits_{\gamma}^{R_{\Gamma}\lb x\rb}V(y_{1},y_{2};R_{\Gamma}\lb x\rb)\,\left[ \PD_{2}w(y_{1},y_{2})\D y_{1}-\PD_{1}w(y_{1},y_{2})\D y_{2}\right] \\ \label{eH}
			&&-\frac{1}{2i}\int\limits_{\gamma}^{R_{\Gamma}\lb x\rb}w(y_{1},y_{2})\,\left[ \PD_{2}V(y_{1},y_{2};R_{\Gamma}\lb x\rb)\D y_{1}-\PD_{1}V(y_{1},y_{2};R_{\Gamma}\lb x\rb)\D y_{2}\right],
			\en
			for $x\in U^-$,
			where the integral $\int_{\gamma}^{R_{\Gamma}\lb x\rb}$ is calculated on any path joining an arbitrary point on $\gamma$ with the point $R_{\Gamma}(x)$. The function $V(y;\widetilde{x})$ $(y, \tilde{x}\in \R^2)$ is given by $V=V_1-V_2$, where
			\be\label{Exprssion for V}
			V_1(y;\widetilde{x})=1-2\exp\left[-i\int\limits_{S(z_{0})}^{\omega}\frac{\beta(\widetilde{S}(\tau),\tau)\sqrt{\widetilde{S}'(\tau)}}{\alpha(\widetilde{S}(\tau),\tau)}\D \tau\right],\;
			V_2(y;\widetilde{x})= 1-2\exp \left[i\int\limits_{\widetilde{S}(\omega_{0})}^{z}\frac{\beta(\tau,S(\tau))\sqrt{S'(\tau)}}{\alpha(\tau,S(\tau))}\D \tau\right]
			\en
			with $z=y_{1}+iy_{2}$, $\omega=y_{1}-iy_{2}$, $z_{0}=\widetilde{x}_{1}+i\widetilde{x}_{2}$, $\omega_{0}=\widetilde{x}_{1}-i\widetilde{x}_{2}$. The integrals in (\ref{Exprssion for V}) are complex integrals between two points in the complex plane $\C$.
		\end{proposition}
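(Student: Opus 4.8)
The plan is to reduce the reflection problem to a characteristic (Goursat) problem for the complexified Laplace equation and then to invoke Riemann's representation formula. First I would pass to the bicharacteristic coordinates $z=x_1+ix_2$, $\omega=x_1-ix_2$, in which $\Delta=4\,\partial_z\partial_\omega$, so that a harmonic $w$ becomes a function $W(z,\omega)$ solving the characteristic wave equation $\partial_z\partial_\omega W=0$. Since $w$ is real-analytic in $U^+$, its complexification $W$ extends holomorphically to a neighborhood $V\subset\C^2$ of $U^+$ in which $z$ and $\omega$ are treated as independent complex variables; the subarc $\gamma$ lifts to the complexified curve $\Gamma_\C=\{\omega=S(z)\}=\{z=\widetilde S(\omega)\}$ of \eqref{Complexified curve equation}.

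Next I would rewrite the Robin condition $\alpha\,\partial_\nu w+\beta w=0$ on $\gamma$ in these coordinates. Expressing $\partial_\nu$ through $\partial_z$ and $\partial_\omega$ along $\Gamma_\C$ and normalizing by the complex arclength produces the Jacobian weights $\sqrt{S'}$ and $\sqrt{\widetilde S'}$, and the resulting relation is a first-order linear ODE for $W$ restricted to each characteristic of $\Gamma_\C$. Integrating this ODE along the two characteristics $z=z_0$ and $\omega=\omega_0$ issuing from the reflected point $R_\Gamma(x)$ yields precisely the exponential factors in \eqref{Exprssion for V}, which serve as the Goursat data for the Riemann function $V=V_1-V_2$ of the (self-adjoint) operator $\partial_z\partial_\omega$. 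The additive constants in $V_1,V_2$ encode the trivial point-to-point reflection already present in the Dirichlet and Neumann cases, while the exponential corrections carry the effect of the impedance term $\beta/\alpha$.

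With $V$ in hand I would apply Riemann's method: integrating the complex Green identity for $\partial_z\partial_\omega$ along a characteristic contour joining $\gamma$ to $R_\Gamma(x)$ expresses $W$ at that point as boundary terms pairing $W$ and its first derivatives against $V$ and its derivatives. Translating back to the real variables $(y_1,y_2)$, in which $\partial_2 w\,\D y_1-\partial_1 w\,\D y_2$ is the real form of the conormal differential, gives exactly the two line integrals in \eqref{eH} added to the point-to-point term $w(R_\Gamma(x))$. To finish I would verify that the integrand is a closed one-form, as a direct consequence of $\partial_z\partial_\omega W=0$ together with the Goursat equation for $V$, so that the integral does not depend on the path from $\gamma$ to $R_\Gamma(x)$ and $\widetilde w$ is well defined; that $\widetilde w$ is harmonic in $U^-$ because it is assembled from values of the harmonic $W$; and that $\widetilde w$ together with its conormal derivative matches $w$ across $\gamma$, so that \eqref{eH} is a genuine analytic continuation rather than merely a formal extension.

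I expect the main obstacle to lie in the second step, namely the precise derivation of $V_1$ and $V_2$: one must convert the normal derivative in the Robin condition into characteristic derivatives with the correct $\sqrt{S'}$ and $\sqrt{\widetilde S'}$ weights and the correct signs, and then control the branch of the square root and the multivaluedness of the exponential so that the path integral in \eqref{eH} remains single-valued on the possibly non-simply-connected domain cut out by the subarc $\gamma$. Establishing this path-independence and single-valuedness, rather than the formal Riemann-function computation, is the delicate point of the argument.
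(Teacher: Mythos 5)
The paper does not actually prove Proposition \ref{Reflection harmonic general case}: it is imported verbatim from \cite{Belinskiy_Savina_Schwartz_Harmonic_Robin}, and your sketch reconstructs essentially that reference's argument, namely complexification via the Schwarz function $\omega=S(z)$, $z=\widetilde{S}(\omega)$, the one-variable exponential factors $V_1,V_2$ (each annihilated by $\partial_z\partial_\omega$, hence harmonic) chosen via first-order ODEs along the characteristics so that the boundary one-form vanishes on $\Gamma_{\C}$, and a Green/Riemann-type identity producing the two line integrals in (\ref{eH}) on top of the point-to-point term $w(R_\Gamma(x))$. The delicate points you correctly single out, the $\sqrt{S'}$ and $\sqrt{\widetilde{S}'}$ weights, the branches of the multivalued $V$, and path-independence including the choice of base point on $\gamma$, are exactly those the paper disposes of in the remark following the proposition: closedness of the form (from harmonicity of $w$ and $V$) plus vanishing of the integrand on $\Gamma_{\C}$ give path-independence, under the standing assumption that $U$ contains no singularities of $S$ and $\widetilde{S}$.
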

		
		{We remark that, since $\Gamma$ is an algebraic curve, the Schwarz function $S$ and its inverse $\widetilde{S}$ are complex analytic functions on $\C$ with algebraic singularities only.
			The function $V$ is a multi-valued function over $\C^2$ whose singularities coincide with $S$ and $\widetilde{S}$. It is supposed in Proposition
			\ref{Reflection harmonic general case} that the domain $U$ does not contain such singularities. On the other hand, the constructed function $V$ ensures that the integrand appearing in (\ref{eH})
			vanishes on $\Gamma_\C$ (see \cite{Belinskiy_Savina_Schwartz_Harmonic_Robin}).
			Hence, the integral on the right hand side of (\ref{eH}) is independent on the path of integration and the choice of the point on $\gamma$. Although the mapping $R_\Gamma$ is originally constructed in a small neighborhood of $\Gamma_\C$, by the uniqueness theorem for analytic functions the resulting extension formula is valid in the large domain.
			In the special case that $\Gamma$ is a straight line, one can obviously get a non-local extension formula in the real space $\R^2$.}

		{Proposition \ref{Reflection harmonic general case} will be used later to prove Theorem \ref{Th2} for recovering holomorphic impedance coefficients}. %In our applications, the boundary $\gamma$ is a finite line segment and without loss of we assume that $\Gamma=\{x_2=0\}$. Then, we have the Schwarz function $\widetilde{S}(z)=S(z)\equiv z$ for $z\in \C$. In the special case that $\alpha\equiv 1$, the function $V$ takes the form
		%\ben
		%V(y;\tilde{x})=2\exp \left[i\int\limits_{\widetilde{S}(\omega_{0})}^{z} \beta(\tau,S(\tau))\D \tau\right]
		%\enn
		{For the purpose of clarity, from now on we shall restrict our discussions to the case when $\alpha\equiv 1$, $\beta\equiv i\,\lambda$ for some constant $\lambda>0$ and when
			the curve $\Gamma\subset\R^{2}$ coincides with the real axis. With these settings,
			we can get a single-valued extension formula with a
			more explicit integral form than (\ref{eH})}.
		%Let $\Omega\subset\R^{2}$ is connected open set. We assume that  $\Omega$ is symmetric with respect to a line $L:=\{(x_{1},x_{2})\in\R^{2}:\ x_{2}=0\}$ i.e. $(x_{1},x_{2})\in\Omega$ if and only if $(x_{1},-x_{2})\in\Omega$. We denote by $\widetilde{L}:=L\cap\Omega$. We also denote by  $\Omega^{+}$ and $\Omega^{-}$ as the two symmetric parts of $\Omega$ into which $\Omega\setminus \widetilde{L}$ is divided by $L$. We also assume that $\Omega^{\pm}$ is path connected with $\widetilde{L}$ i.e. for any pair of points $P\in\Omega^{\pm}$ and  $P_{0}\in \widetilde{L}$ there exists a real analytic path $\Gamma(t)$ joining $P$ and $P_{0}$ such that $\{\Gamma(t):\ t\in[0,1]\}\subset\Omega^{\pm}$.
		Let $\Omega\subset\R^{2}$ be a connected open set which is symmetric with respect to the real axis $L:=\{(x_{1},x_{2})\in\R^{2}:\ x_{2}=0\}$, i.e., $(x_{1},x_{2})\in\Omega$ if and only if $(x_{1},-x_{2})\in\Omega$. Set $\widetilde{L}:=L\cap \Omega$, and denote by  $\Omega^{+}$ and $\Omega^{-}$ the two symmetric parts of $\Omega$ into which $\Omega\setminus \widetilde{L}$ is divided by $L$. %We also assume that $\Omega^{\pm}$ are such that for any pair of points $P\in\Omega^{\pm}$ and  $Q\in \widetilde{L}$ there exists a real analytic path $\chi(t)$ joining $P$ and $Q$ such that $\{\chi(t):\ t\in[0,1]\}\subset\Omega^{\pm}$.
		
		\begin{corollary}\label{Harmonic reflection corollary}
			Suppose that $w$ is a solution to
			\begin{equation}\label{Harmonic function reflection}
				\Delta w=0\quad \mbox{in}\quad\Omega^{+},\quad
				\PD_{2}w+i\lambda w=0\ \quad\mbox{on}\quad \widetilde{L}.
			\end{equation}
			Then $w$ can be extended as a harmonic function from $\Omega^+$ to $\Omega$.  If $\chi(t):=\left\{\lb \chi_{1}(t),\chi_{2}(t)\rb:\ t\in[0,1]\right\}\subset\Omega^{+}$ is a  {piecewise-smooth} curve joining  $\chi(1):=x\in\Omega$ to  an arbitrary point  $\chi(0)\in \widetilde{L}$,  then the extended function $\widetilde{w}$ is given by
			\begin{equation}\label{Extension fomula harmonic}
				\begin{aligned}
					\widetilde{w}(x)=\begin{cases}
						w(x)\ &\mbox{if}\ x\in \Omega^{+}\cup \widetilde{L},\\
						(\widetilde{\mathcal{D}}w)(x_{1},-x_{2})\ &\mbox{if}\ x\in\Omega^{-},
					\end{cases}
				\end{aligned}
			\end{equation}
			where the operator $\widetilde{\mathcal{D}}$ is defined by
			\be\nonumber
			(\widetilde{\mathcal{D}}w)(x)\!\!\!&:=&\!\!\!w(x)+2i\lambda e^{-i\lambda x_{2}}\int\limits_{0}^{1}e^{-i\lambda \chi_{2}(t)}\cosh[\lambda \lb \chi_{1}(t)-x_{1}\rb]\, \chi_{2}'(t)w(\chi_{1}(t),\chi_{2}(t))\D t \\ \label{Definition of D0}
			\!\!\!\!&-&\!\!\!2\lambda e^{-i\lambda x_{2}}\int\limits_{0}^{1}e^{-i\lambda \chi_{2}(t)}\sinh[\lambda \lb \chi_{1}(t)-x_{1}\rb]\, \chi_{1}'(t) w(\chi_{1}(t),\chi_{2}(t))\D t\\ \nonumber
			\!\!\!\!&+&\!\!\! 2i e^{-i\lambda x_{2}}\int\limits_{0}^{1}e^{-i\lambda \chi_{2}(t)}\sinh[\lambda\lb \chi_{1}(t)-x_{1}\rb]\,\left [\PD_{2}w(\chi_{1}(t),\chi_{2}(t))\chi_{1}'(t)-\PD_{1}w(\chi_{1}(t),
			\chi_{2}(t))\chi_{2}'(t)\right]\D t.
			\en
			Here $\sinh(t):=(e^{t}-e^{-t})/2$ and $\cosh (t):=(e^{t}-e^{-t})/2$ are hyperbolic functions.
			\begin{proof} Let $x=(x_{1},x_{2})\in\Omega^{+}$, so that $(x_1, -x_2)\in \Omega^-$.
				By Proposition \ref{Reflection harmonic general case},
				\begin{align}\label{Expression for w}
					\begin{aligned} w(x_{1},-x_{2})&=w(x)+\frac{1}{2i}\int\limits_{\widetilde{L}}^{x}
						V(y;x)[ \PD_{2}w(y)\D y_{1}-\PD_{1}w(y)\D y_{2}]\\
						& \ \ \ -\frac{1}{2i}\int\limits_{\widetilde{L}}^{x}w(y)\,[ {\PD_{2}}V(y;x)\D y_{1}-{\PD_{1}}V(y;x)\D y_{2}],
					\end{aligned}
				\end{align}
				where the integral $\int_{\widetilde{L}}^{x}$  is independent on the path joining an arbitrary point on $\widetilde{L}$ with $x$, and the function $V=V_1-V_2$ is given by \eqref{Exprssion for V} with $\alpha=1,\beta=i\lambda$ and the Schwarz functions $S(z)=\widetilde{S}(z)=z$. Obviously,
				\ben V_{1}(y_{1},y_{2};x_{1},x_{2})=1-\exp\Bigg[-i\int\limits_{S(z_{0})}^{y_{1}-iy_{2}}i\lambda \D \tau\Bigg],  \quad\ V_{2}(y_{1},y_{2};x_{1},x_{2})=1-\exp\Bigg[i\int\limits_{\widetilde{S}(w_{0})}^{y_{1}+iy_{2}}i\lambda \D \tau\Bigg].
				\enn
				Since $S(z_{0})=z_{0}=x_{1}+ix_{2}$ and $\widetilde{S}(w_{0})=w_{0}=x_{1}-ix_{2}$,  we get
				\ben
				&V_{1}(y;x)=1-2\exp\Big[\lambda \Big((y_{1}-x_{1})-i(y_{2}+x_{2})\Big)\Big],\\	&V_{2}(y;x)=1-2\exp\Big[-\lambda\Big(\lb y_{1}-x_{1}\rb+i\lb y_{2}+x_{2}\rb\Big) \Big],
				\enn
				implying that
				\begin{align*}
					&V(y_{1},y_{2};x_{1},x_{2})=-4e^{-i\lambda \lb y_{2}+x_{2}\rb} \sinh[\lambda\lb y_{1}-x_{1}\rb], \\ &\PD_{y_1}V(y_{1},y_{2};x_{1},x_{2})=-4\lambda e^{-i\lambda\lb y_{2}+x_{2}\rb}\cosh[\lambda\lb y_{1}-x_{1}\rb], \\
					&  \PD_{y_2}V(y_{1},y_{2};x_{1},x_{2})=4i\lambda e^{-i\lambda\lb y_{2}+x_{2}\rb}\sinh[\lambda\lb y_{1}-x_{1}\rb].
				\end{align*}
				Now let  $\chi(t):=\lb \chi_{1}(t),\chi_{2}(t)\rb$ be a path connecting an arbitrary point $\lb \overline{x}_{1},0\rb\in \widetilde{L}$ with the point $x:=(x_{1},x_{2})\in \Omega^{+}$ such that $\chi(0)=\lb \overline{x}_{1},0\rb\in\widetilde{L}$ and $\chi(1)=x$. With this choice of $\chi(t)$ and the expression for $V$, the right hand side of \eqref{Expression for w} can be rewritten as
				\begin{align*}
					&w(x_{1},-x_{2})=w(x_{1},x_{2}) -\frac{1}{2i}\int\limits_{0}^{1}w(\chi_{1}(t),\chi_{2}(t))\lb 4i\lambda e^{-i\lambda\lb \chi_{2}(t)+x_{2}\rb}\sinh\lambda \lb \chi_{1}(t)-x_{1}\rb\rb\chi_{1}'(t)\D t\\
					&\ \ +\frac{1}{2i}\int\limits_{0}^{1}w(\chi_{1}(t),\chi_{2}(t))\lb -4\lambda e^{-i\lambda \lb \chi_{2}(t)+x_{2}\rb}\cosh\lambda\lb \chi_{1}(t)-x_{1}\rb\rb \chi_{2}'(t)\D t\\
					&\ \  +\frac{1}{2i}\int\limits_{0}^{1}\lb -4e^{-i\lambda \lb \chi_{2}(t)+x_{2}\rb} \sinh\lambda\lb \chi_{1}(t)-x_{1}\rb\rb \lb \PD_{2}w(\chi_{1}(t),\chi_{2}(t)) \chi'_{1}(t)-\PD_{1}w(\chi_{1}(t),\chi_{2}(t))\chi'_{2}(t)\rb\D t.
				\end{align*}
				Simple calculations show that for $x\in\Omega^{+}$,
				\be\nonumber
				&&w(x_{1},-x_{2})=w(x)-2\lambda e^{-i\lambda x_{2}}\int\limits_{0}^{1}e^{-i\lambda\chi_{2}(t)}w(\chi_{1}(t),\chi_{2}(t))  \sinh[\lambda \lb \chi_{1}(t)-x_{1}\rb]\,\chi_{1}'(t)\D t\\ \nonumber
				&&\ \ \ +2i\lambda e^{-i\lambda x_{2}}\int\limits_{0}^{1}e^{-i\lambda  \chi_{2}(t)}w(\chi_{1}(t),\chi_{2}(t)) \cosh[\lambda\lb \chi_{1}(t)-x_{1}\rb]\, \chi_{2}'(t)\D t\\ \nonumber
				&& \ \ \ +2ie^{-i\lambda x_{2}}\int\limits_{0}^{1} e^{-i\lambda  \chi_{2}(t)} \sinh[\lambda\lb \chi_{1}(t)-x_{1}\rb]\,\lb \PD_{2}w(\chi_{1}(t),\chi_{2}(t)) \chi'_{1}(t)-\PD_{1}w(\chi_{1}(t),\chi_{2}(t))\chi'_{2}(t)\rb\D t\\ \label{D0}
				&&  =: (\widetilde{\mathcal{D}}w)(x).
				\en
				Hence the function $\widetilde{w}$ given by (\ref{Extension fomula harmonic})
				is well defined in $\Omega^-$. Below we show that $\widetilde{w}$ is indeed the extension from $\Omega^{+}$ to $\Omega$.
				For this purpose, we first prove that $\Delta \widetilde{w}=0$ in $\Omega^-$. For $x\in \Omega^{-}$, we have
				\begin{align*} \widetilde{w}(x)&=(\widetilde{\mathcal{D}}w)(x_{1},-x_{2}):=w(x_{1},-x_{2})+2i\lambda e^{i\lambda x_{2}}\int\limits_{0}^{1}e^{-i\lambda  \chi_{2}(t)}w(\chi_{1}(t),\chi_{2}(t)) \cosh[\lambda\lb \chi_{1}(t)-x_{1}\rb] \chi_{2}'(t)\D t\\
					&\ \ -2\lambda e^{i\lambda x_{2}}\int\limits_{0}^{1}e^{-i\lambda\chi_{2}(t)}w(\chi_{1}(t),\chi_{2}(t))  \sinh[\lambda \lb \chi_{1}(t)-x_{1}\rb]\chi_{1}'(t)\D t\\
					& \ \ +2ie^{i\lambda x_{2}}\int\limits_{0}^{1} e^{-i\lambda  \chi_{2}(t)} \sinh[\lambda\lb \chi_{1}(t)-x_{1}\rb]\Big( \PD_{2}w(\chi_{1}(t),\chi_{2}(t)) \chi'_{1}(t)-\PD_{1}w(\chi_{1}(t),\chi_{2}(t))\chi'_{2}(t)\Big)\D t.
				\end{align*}
				Taking Laplacian on both sides of the above equation and using $v''(\lambda t)=\lambda^2 v(\lambda t)$ for $v=\sinh,\cosh$, we have
				\begin{align*}
					\Delta \widetilde{w}(x_{1},x_{2})=\Delta w(x_{1},-x_{2})=0\quad\mbox{for all}\quad x\in \Omega^-.
				\end{align*}
				%Now since $(x_{1},x_{2})\in\Omega^{-}$ therefore $(x_{1},-x_{2})\in\Omega^{+}$, hence using the fact that $\Delta w(x_{1},x_{2})=0$ for $(x_{1},x_{2})\in\Omega^{+}$, we get that $\Delta \widetilde{w}(x_{1},x_{2})=0,$ for $(x_{1},x_{2})\in \Omega^{-}$ as well.
				Next we show that $\widetilde{w}$ and $\partial_2\widetilde{w}$ are continuous on $\tilde{L}$. For $x=(x_1,x_2)\in \Omega^-$ with $x_2$ close to zero,
				%Hence it is enough to show that $\PD_{2}\widetilde{w}\in C(\Omega)$. Obviously we have that $\PD_{2}\widetilde{w}\in C(\Omega^{+}\cup\Omega^{-})$.
				%		Since we want to evaluate $\lim_{x_{2}\rightarrow 0} \PD_{2}\widetilde{w}(x_{1},x_{2})$  for  $(x_{1},x_{2})\in\Omega^{-}$ with $x_{2}$  close to zero therefore
				we can choose the finite line segment $\chi(t)=(\chi_{1}(t),\chi_{2}(t)):=(x_{1},-tx_{2})\subset \Omega^{+}$ for $t\in [0,1]$, which is perpendicular to the interface $\widetilde{L}$ at $(x_1,0)$.  This is due to the reason that the integral does not depend on the path of integration.
				Using this curve in \eqref{Extension fomula harmonic}, we get
				\begin{align*}
					\widetilde{w}(x)=
					w(x_{1},-x_{2})-2i\lambda e^{-i\lambda x_{2}}\int\limits_{0}^{1}e^{i\lambda tx_{2}}w(x_{1},-tx_{2})x_{2}\D t, \end{align*}
				for $x\in\Omega^{-}$ with $x_{2}$ sufficiently small.
				Changing the variable $-tx_{2}=s$ in the above expression, we get (cf. \cite[Remark 4.1]{Belinskiy_Savina_Schwartz_Harmonic_Robin})
				\be\label{w0}
				\widetilde{w}(x_1,x_2)=
				w(x_{1},-x_{2})+2i\lambda e^{-i\lambda x_{2}}\int\limits_{0}^{-x_{2}}e^{-i\lambda s}w(x_{1},s)\D s,
				\en
				which implies the continuity of $\widetilde{w}$ on $\widetilde{L}$. Taking the derivative with respect to $x_2$, we obtain
				%		Now we have $\PD_{2}\widetilde{w}(x_{1},x_{2})=\PD_{2}w(x_{1},x_{2})$ for $(x_{1},x_{2})\in \Omega^{+}$
				%and
				\[\PD_{2}\widetilde{w}(x_{1},x_{2})=-\PD_{2}w(x_{1},-x_{2})-2\lambda^{2}e^{i\lambda x_{2}} \int\limits_{0}^{-x_{2}}e^{-i\lambda s}w(x_{1},s)\D s-2i\lambda e^{2i\lambda x_{2}}w(x_{1},-x_{2}), \]
				for $x\in\Omega^{-}$ close to $\widetilde{L}$, and thus
				\begin{align*}
					\lim_{x_{2}\rightarrow 0^+}\PD_{2}\widetilde{w}(x_{1},x_{2})=
					-\PD_{2}w(x_{1},0)-2i\lambda w(x_{1},0),\qquad \mbox{if} \ (x_{1},x_{2})\in\Omega^{-}.
				\end{align*}
				Using Equation \eqref{Harmonic function reflection}, we conclude that
				$\PD_{2} \widetilde{w}\in C(\Omega)$.
				Hence, the Cauchy data of $\widetilde{w}$ are continuous on $\widetilde{L}$, implying that $\widetilde{w}$ is harmonic over $\Omega$.
				In particular,
				\ben
				\lim_{x_{2}\rightarrow 0^-}(\PD_{2}\widetilde{w}+i\lambda \widetilde{w})=-\PD_{2}w(x_{1},0)-i\lambda w(x_{1},0)=0.
				\enn
				This proves that
				$\widetilde{w}$ given by \eqref{Extension fomula harmonic} is indeed the extension formula for $w$.
			\end{proof}
		\end{corollary}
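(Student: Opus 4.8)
The plan is to obtain Corollary \ref{Harmonic reflection corollary} as a specialization of the general reflection principle, Proposition \ref{Reflection harmonic general case}, to the flat interface $\Gamma=L=\{x_2=0\}$ with $\alpha\equiv 1$ and $\beta\equiv i\lambda$. First I would record that for the real axis the defining function is $f(x)=x_2$, so the associated Schwarz function and its inverse are simply $S(z)=\widetilde{S}(z)=z$ with $S'=\widetilde{S}'=1$. Substituting these into the kernel formula \eqref{Exprssion for V} collapses each contour integral in the exponent to the integral of the constant $i\lambda$, so that $V_1$ and $V_2$ become explicit exponentials. After inserting $z_0=x_1+ix_2$ and $\omega_0=x_1-ix_2$ I expect $V(y;x)$ to reduce to a product of an exponential in $y_2+x_2$ and a hyperbolic sine in $y_1-x_1$, with $\partial_{y_1}V$ and $\partial_{y_2}V$ of the same closed form. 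This makes the abstract formula \eqref{eH} completely explicit.

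Next I would fix a piecewise-smooth path $\chi$ from an arbitrary point of $\widetilde{L}$ to $x$ and substitute the explicit $V$, $\partial_{y_1}V$, $\partial_{y_2}V$ into \eqref{eH}, pulling the factor $e^{-i\lambda x_2}$ out of the integral. Collecting the three resulting integrals yields the operator $\widetilde{\mathcal{D}}$ of \eqref{Definition of D0}, and one then \emph{defines} the extension on $\Omega^-$ by reflecting, $\widetilde{w}(x)=(\widetilde{\mathcal{D}}w)(x_1,-x_2)$. Here I would stress that the whole construction is legitimate only because $V$ is built so that the integrand in \eqref{eH} is closed along $\Gamma_\C$; this path-independence, inherited directly from Proposition \ref{Reflection harmonic general case}, is what lets me later choose whatever path is most convenient.

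The verification then splits into two checks. To see that $\widetilde{w}$ is harmonic in $\Omega^-$, I would apply $\Delta$ to the explicit integral formula; differentiating the factors $e^{-i\lambda x_2}\sinh[\lambda(\chi_1-x_1)]$ and $e^{-i\lambda x_2}\cosh[\lambda(\chi_1-x_1)]$ twice and using $v''(\lambda t)=\lambda^2 v(\lambda t)$ for $v=\sinh,\cosh$ makes the $x_1^2$ and $x_2^2$ contributions cancel, leaving $\Delta\widetilde{w}(x_1,x_2)=\Delta w(x_1,-x_2)=0$. To match the Cauchy data across $\widetilde{L}$, I would exploit path-independence and take the perpendicular segment $\chi(t)=(x_1,-tx_2)$; this collapses \eqref{Definition of D0} to a single one-variable integral and, after the substitution $s=-tx_2$, to the compact form \eqref{w0}. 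From \eqref{w0} the continuity of $\widetilde{w}$ on $\widetilde{L}$ is immediate, and differentiating in $x_2$ and letting $x_2\to 0$ produces $\partial_2\widetilde{w}(x_1,0)=-\partial_2 w(x_1,0)-2i\lambda w(x_1,0)$; invoking the boundary condition $\partial_2 w+i\lambda w=0$ of \eqref{Harmonic function reflection} shows that $\widetilde{w}$ and $\partial_2\widetilde{w}$ agree from both sides on $\widetilde{L}$.

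I expect the main obstacle to be precisely this Cauchy-data matching: the bookkeeping of the one-sided limits of $\partial_2\widetilde{w}$ and the correct use of the impedance condition to cancel the boundary terms, which is where a careless choice of path or sign would break the argument. Once continuity of the full Cauchy data on $\widetilde{L}$ is established, harmonicity of $\widetilde{w}$ across the interface follows from the standard fact that a function harmonic on each side of a real-analytic arc with matching Dirichlet and Neumann data is harmonic across it, completing the proof that \eqref{Extension fomula harmonic} is the desired extension.
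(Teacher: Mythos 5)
Your proposal is correct and follows essentially the same route as the paper's own proof: specializing Proposition \ref{Reflection harmonic general case} with $S(z)=\widetilde{S}(z)=z$, $\alpha\equiv 1$, $\beta\equiv i\lambda$ to obtain the explicit kernel $V$ and the operator $\widetilde{\mathcal{D}}$, verifying harmonicity in $\Omega^-$ via $v''(\lambda t)=\lambda^2 v(\lambda t)$, and matching Cauchy data on $\widetilde{L}$ by exploiting path-independence to choose the perpendicular segment $\chi(t)=(x_1,-tx_2)$, which collapses the formula to \eqref{w0}. All the key steps, including the sign bookkeeping in the one-sided limit of $\partial_2\widetilde{w}$ and the use of the impedance condition \eqref{Harmonic function reflection}, match the paper's argument.
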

		\begin{remark}\label{remark2.3}
			\begin{itemize}
				\item[(i)]
				Note that the operator $\widetilde{\mathcal{D}}$ defined in (\ref{D0})
				maps analytic functions in $\Omega^{+}$ to itself and it can be extended to an operator defined on a class of functions over $\Omega$. From the proof of Corollary \ref{Harmonic reflection corollary}, it holds that $\widetilde{w}(R_L x)=\widetilde{\mathcal{D}}\widetilde{w}(x)$ for all $x\in\Omega$, where $R_L$ denotes the reflection with respect to $L$.
				\item[(ii)] Suppose that $\Omega^+$ fulfills the following geometric condition: the projection onto $L$ of any line segment lying in $\Omega^+$ is a subset of $\widetilde{L}$, that is, $(x_1, x_2)\in \Omega^+$ implies $(x_1,0)\in \widetilde{L}$. Then the extension formula (\ref{Definition of D0}) can be reduced to (\ref{w0}) by choosing the path $\chi_1(t)\equiv x_1$ and $\chi_2(t)=tx_2$ for $t\in[0,1]$.
			\end{itemize}
		\end{remark}
		\section{reflection principle for Helmholtz equation}\label{Helmholtz reflection}
		
		There have been extensive works in the literature related to reflection principles for the  elliptic partial differential equations of second order with constant or real analytic coefficients.  We refer to \cite{Belinskiy_Savina_Schwartz_Harmonic_Robin,
			Davis79,Ebenfelt_Khavinson_Point to point refelection_Harmonic_Dirichlet,
			Garabedian,Khavinson_Shapiro_Harmonic_Remark} for references related to harmonic functions.
		In \cite{Savina_Sternin_Shatalov_Reflection_Helmholtz, Savina_general_elliptic_Transaction_AMS} the reflection principle  for the Helmholtz equation with the Dirichlet boundary condition given on a real analytic curve was derived, which was later extended to the Neumann boundary condition in \cite{Savina_Helmholtz_Neumann}.
		In two dimensions, we believe that the reflection principle for the Helmholtz equation subject to the Robin boundary condition can be established analogously, following the approaches for harmonic functions \cite{Belinskiy_Savina_Schwartz_Harmonic_Robin} and the ideas in \cite{Savina_Sternin_Shatalov_Reflection_Helmholtz,
			Savina_general_elliptic_Transaction_AMS,
			Savina_Helmholtz_Neumann}. {In fact, for this purpose it is only required to redefine the function $V$ appearing in (\ref{eH}) subject to the Helmholtz equation}. However, in this paper we prefer an alternative method presented in our previous paper \cite{Elschner_Guang-hui_Inverse_elastic_scattering}, where a non-local extension formula for the Navier equation with the Dirichlet boundary condition was established in 2D.
		By \cite{Elschner_Guang-hui_Inverse_elastic_scattering}, the existence of an extension operator for the Helmholtz equation relies on the case of the harmonic function.
		
		Let $\widetilde{L}\subset L$ and $\Omega=\Omega^+\cup\widetilde{L}\cup\Omega^-$ be defined in the same way as before. We remark that the domain $\Omega^+$ can be any connected open set lying on one side of $L$ and having the flat subboundary $\widetilde{L}$ on its boundary.
		%This section is devoted to proving the reflection principle for the Helmholtz equation with impedance boundary condition.
		%Let $\Omega\subset\R^{2}$ is connected open set. We assume that  $\Omega$ is symmetric with respect to a line $L:=\{(x_{1},x_{2})\in\R^{2}:\ x_{2}=0\}$ i.e. $(x_{1},x_{2})\in\Omega$ if and only if $(x_{1},-x_{2})\in\Omega$. We denote by $\widetilde{L}:=L\cap\Omega$ and   $\Omega^{+}$ and $\Omega^{-}$ denote the two symmetric parts of $\Omega$ into which $\Omega\setminus \widetilde{L}$ is divided by $L$. We also assume that $\Omega^{\pm}$ is path connected with $\widetilde{L}$ i.e. for any pair of points $P\in\Omega^{\pm}$ and  $P_{0}\in \widetilde{L}$ there exists a real analytic path $\chi(t)$ joining $P$ and $P_{0}$ such that $\{\chi(t):\ t\in[0,1]\}\subset\Omega^{\pm}$. Now
		Consider the Helmholtz equation subject to the impedance boundary condition
		\begin{equation}\label{Helmholtz equation for Reflection}
			\Delta u+k^{2}u=0\ \quad\mbox{in}\;\Omega^{+},\quad
			\PD_{2}u+i\lambda u=0\quad\mbox{on}\; \widetilde{L},
		\end{equation}
		where $k^2>0$ and $\lambda>0$ are constants.  Our aim is to extend the solution $u$ of Equation \eqref{Helmholtz equation for Reflection}  from $\Omega^{+}$ to $\Omega^{-}$. %We will give an extension formula for $u$ which express the values of solution $u$ at an arbitrary point $(x_{1},x_{2})\in\Omega^{-}$ in terms of its values at points in $\Omega^{+}$. More precisely,
		The following theorem is sufficient in our uniqueness proofs.
		\begin{theorem}\label{Reflection principle for Helmholtz equation}
			The solution $u$ to Equation \eqref{Helmholtz equation for Reflection} can be extended from $\Omega^{+}$  to $\Omega^{-}$ by some extension operator $\mathcal{D}$ (see \eqref{Definition of Dk} below), that is, defining
			\begin{align*}
				u(x_{1},-x_{2}):=(\mathcal{D}u)(x_1, x_2)\qquad\mbox{for} \ (x_{1},x_{2})\in\Omega^{+},
			\end{align*}
			we have $\Delta u+k^2 u=0$ in $\Omega$.
		\end{theorem}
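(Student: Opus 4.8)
The plan is to reduce the Helmholtz extension entirely to the harmonic reflection of Corollary \ref{Harmonic reflection corollary}, exploiting a commutation identity between the reflection operator and the Laplacian. Since $u$ solves an elliptic equation with constant coefficients and the Robin condition on $\widetilde{L}$ has analytic coefficients, $u$ is real-analytic in $\Omega^{+}$ up to $\widetilde{L}$, so its Cauchy data $\lb u,\PD_{2}u\rb=\lb u,-i\lambda u\rb$ on $\widetilde{L}$ are real-analytic. Hence it suffices to produce a genuine analytic extension satisfying the equation in $\Omega^{-}$; the Helmholtz equation then propagates across $\widetilde{L}$ once the Cauchy data glue continuously.

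The engine is the following observation. Let $\Ec$ denote the reflection–extension operator of Corollary \ref{Harmonic reflection corollary}, i.e. the map sending $w$ on $\Omega^{+}$ to the function $\widetilde{w}$ on $\Omega^{-}$ given by \eqref{Definition of D0}, equivalently, near $\widetilde{L}$, by the normal–segment formula \eqref{w0}. The derivation of Corollary \ref{Harmonic reflection corollary} never used $\Delta w=0$ except to make a remainder vanish; carrying the same computation with $\Delta w$ retained as a source and integrating by parts twice in the normal variable, every term cancels except a multiple of the boundary expression $\PD_{2}w+i\lambda w$, which vanishes on $\widetilde{L}$, and a remainder equal to $\Ec$ applied to $\Delta w$. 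That is, for every $w$ satisfying $\PD_{2}w+i\lambda w=0$ on $\widetilde{L}$ one has
\begin{align*}
\Delta(\Ec w)=\Ec(\Delta w)\qquad\text{in }\Omega^{-}.
\end{align*}
Granting this, the theorem is immediate in the model geometry: applying $\Ec$ to the given solution $u$ and using $\Delta u=-k^{2}u$ together with the linearity of $\Ec$ gives $\Delta(\Ec u)=\Ec(-k^{2}u)=-k^{2}(\Ec u)$, so $\Ec u$ solves $\Delta+k^{2}=0$ in $\Omega^{-}$. The matching of $\widetilde{w}$ and $\PD_{2}\widetilde{w}$ across $\widetilde{L}$ established in Corollary \ref{Harmonic reflection corollary} used only the Robin condition, hence carries over verbatim; thus the function equal to $u$ on $\Omega^{+}$ and to $\Ec u$ on $\Omega^{-}$ has continuous Cauchy data on $\widetilde{L}$ and solves $\Delta+k^{2}=0$ across it. This produces the operator $\mathcal{D}$ with $u(x_{1},-x_{2})=(\mathcal{D}u)(x_{1},x_{2})$.

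The step I expect to be the real obstacle is globalizing this from the special geometry to an arbitrary connected $\Omega^{+}$. The normal–segment formula \eqref{w0} is legitimate only when the whole segment joining $(x_{1},0)$ to the reflected point lies in $\Omega^{+}$, i.e. under the geometric condition of Remark \ref{remark2.3}(ii); for a general connected $\Omega^{+}$ this fails. One cannot simply fall back on \eqref{Definition of D0} along an arbitrary path $\chi\subset\Omega^{+}$, because the Laplace kernel there renders the integrand closed for solutions of the Laplace equation but \emph{not} for solutions of the Helmholtz equation, so the value would a priori depend on $\chi$. I would therefore define $\mathcal{D}$ non-locally and prove it is single-valued by the path argument advertised in the introduction: continue the locally defined Helmholtz extension $\Ec u$, available on a two-sided neighborhood of $\widetilde{L}$, along paths in $\Omega^{-}$, covering $\Omega^{+}$ by a chain of overlapping subdomains each meeting $\widetilde{L}$ in the required star-shaped fashion, defining the extension on each by the normal–segment reflection and checking on the overlaps that the local pieces agree.

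Verifying that consistency, i.e. the path-independence (single-valuedness over loops) of the resulting non-local operator $\mathcal{D}$, is the crux of the argument and is exactly where the reduction to the harmonic reflection principle and the real-analyticity of $u$ enter: on overlaps the two local extensions are real-analytic solutions of $\Delta+k^{2}=0$ sharing Cauchy data on a common subsegment of $\widetilde{L}$, so they coincide by the uniqueness theorem for analytic functions, and closing up loops shows the globally defined $\mathcal{D}u$ is well defined on all of $\Omega^{-}$. Once $\mathcal{D}$ is known to be single-valued, the commutation identity and the Cauchy-data matching above, applied piece by piece, give $\Delta(\mathcal{D}u)+k^{2}(\mathcal{D}u)=0$ throughout $\Omega$, completing the proof.
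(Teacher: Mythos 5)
Your commutation identity is correct as far as it goes: for the vertical-segment formula \eqref{w0} two integrations by parts in $s$, using the Robin condition at $s=0$, give
$\Delta(\Ec w)(x_{1},x_{2})=(\Delta w)(x_{1},-x_{2})+2i\lambda e^{-i\lambda x_{2}}\int_{0}^{-x_{2}}e^{-i\lambda s}(\Delta w)(x_{1},s)\,\D s$,
so $\Delta u=-k^{2}u$ indeed yields $\Delta(\Ec u)=-k^{2}\,\Ec u$, and the Cauchy-data matching carries over. This correctly reproves the reflection principle under the geometric condition of Remark \ref{remark2.3}(ii), i.e.\ essentially the Diaz--Ludford setting. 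The genuine gap is the globalization, which is the actual content of Theorem \ref{Reflection principle for Helmholtz equation}. Your patching scheme requires covering $\Omega^{+}$ by subdomains each of which meets $\widetilde{L}$ and is vertically star-shaped with respect to it; such a cover simply does not exist for a general connected $\Omega^{+}$ --- for instance when part of $\Omega^{+}$ lies over the complement of $\widetilde{L}$, or when vertical segments from points of $\Omega^{+}$ exit the domain, which is precisely the gap-domain geometry in Case (ii) of the proof of Theorem \ref{Main theorem} that the theorem is designed to handle. Points of $\Omega^{-}$ over such regions are reached by no local normal-segment formula, and your fallback --- continuing $\Ec u$ along paths in $\Omega^{-}$ --- asserts the existence of an analytic continuation without supplying any mechanism that produces it: the uniqueness theorem for analytic continuation, which you invoke on overlaps, only yields consistency of extensions that already exist, never their existence. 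A Helmholtz solution defined near $\widetilde{L}$ has no a priori continuation into the rest of $\Omega^{-}$; that is exactly what must be constructed, and you yourself flag this step as the crux while leaving it unproved.

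The paper closes this gap with an idea your proposal lacks: reduce to the harmonic case globally rather than locally. It forms the half-plane Robin Green's function $G_{0}(x;y)=\Phi(x;y)+\widetilde{\mathcal{D}}_{x}\Phi(R_{L}x;y)$ and subtracts the volume potential, setting $w(x)=u(x)-k^{2}\int_{\Omega^{+}}G_{0}(x;y)u(y)\,\D y$; then $\Delta w=0$ in $\Omega^{+}$ and $w$ satisfies the same Robin condition on $\widetilde{L}$. Corollary \ref{Harmonic reflection corollary} then applies on all of $\Omega^{+}$ with no geometric restriction, because its path integral \eqref{Definition of D0} is path-independent for harmonic functions (the kernel $V$ makes the integrand a closed form --- the very property you correctly observe fails for Helmholtz solutions). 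Unwinding the substitution gives the explicit non-local operator \eqref{Definition of Dk}, which depends only on $u|_{\Omega^{+}}$. To salvage your approach you would need to replace the patching step by something equivalent to this potential subtraction, or redefine the kernel $V$ directly for the Helmholtz operator (which the paper remarks is possible but deliberately avoids); as written, the argument proves the theorem only for domains satisfying Remark \ref{remark2.3}(ii), not for general connected $\Omega^{+}$.
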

		\begin{proof}
			Let $\Phi(x;y)=-\frac{1}{2\pi}\log\lvert x-y\rvert$, with $x,y\in \R^2$ and $x\neq y$, be the fundamental solution to $-\Delta$ in $\R^2$. Denote by $G_{0}(x;y)$ the Green's function to $-\Delta$ in the half plane $\R_+^2:= \{x: x_2>0\}$ subject to the Robin boundary condition (\ref{Helmholtz equation for Reflection}) on $x_2=0$. By \cite[Corollary 2.2]{Elschner_Guang-hui_Inverse_elastic_scattering},
			$G_0(x;y)$ can be constructed from  $\Phi(x;y)$ by
			\ben
			G_{0}(x;y)=\Phi(x;y)+\widetilde{\mathcal{D}}_x\Phi(R_{L}x;y),\quad x,y\in \R_+^2,\; x\neq y,
			\enn
			where $\widetilde{\mathcal{D}}$ is the extension operator for $-\Delta$ defined in (\ref{Definition of D0}). Here we write $\widetilde{\mathcal{D}}=\widetilde{\mathcal{D}}_{x}$ to indicate the action of the integral operator with respect to the variable $x$.
			For a solution $u$ to \eqref{Helmholtz equation for Reflection}, we introduce a new function
			\begin{align}\label{Definition of w}
				w(x):= u(x)-k^{2}\int\limits_{\Omega^{+}}G_{0}(x;y)u(y)\D y,\qquad \quad x \in\Omega^{+}.
			\end{align}
			It then follows that $w$ satisfies the following boundary value problem of the harmonic equation
			\begin{equation}\label{Equation for w}
				\Delta w=0\quad\mbox{in}\quad\Omega^{+},\qquad
				\PD_{2}w+i\lambda w=0\quad\mbox{on}\quad \widetilde{L}.
			\end{equation}
			%Using Corollary \ref{Harmonic reflection corollary}, Lemma \ref{Theorem in elstic scattering paper} and Corollary  \ref{Corollary Green function}, we will prove the main Theorem of this section.
			%Since from Lemma \ref{Harmonic reflection corollary} we have the solution $v$ can be extended from $\Omega^{+}$ to $\Omega$ as a harmonic function and the extended function $\widetilde{v}$ is given by Equation \eqref{Extension fomula harmonic}.  Now  there exists an operator $\widetilde{\mathcal{D}}$  which maps analytic functions in $\Omega$ into itself and it satisfies
			%\begin{align*}
			%\widetilde{v}(x_{1},-x_{2})=\widetilde{\mathcal{D}}\widetilde{v}(x_{1},x_{2}),\ \mbox{for}\ (x_{1},x_{2})\in\Omega.
			%\end{align*}
			% Now let $\Phi(x,y)=-\frac{1}{2\pi}\log\lvert x-y\rvert$ be the fundamental solution to $-\Delta$ then using Lemma \ref{Theorem in elstic scattering paper}, we have  $G_{0}(x,y)=\Phi(x,y)+\mathcal{D}_{0x}\Phi(R_{L}x,y)$ $(x\neq y)$, is the half plane Green's function to the boundary value problem \eqref{Harmonic function reflection} where $\mathcal{D}_{0x}$ means the action of $\widetilde{\mathcal{D}}$ with respect to $x$ variable. Using Corollary \ref{Corollary Green function}, we have $G_{0}(x,y)$ will satisfy $G_{0}(R_{L}x,y)=\mathcal{D}_{0x}G_{0}(x,y)$ for $x\neq y$.
			Applying Corollary \ref{Harmonic reflection corollary},  $w$ can be extended as a harmonic function defined in all of $\Omega$. Denoting by $w$ the extended function, we have
			\ben
			w(x_1, x_2)=(\widetilde{\mathcal{D}}w)(x_{1},-x_{2}),\quad
			w(x_1, -x_2)=(\widetilde{\mathcal{D}}w)(x_{1},x_{2})
			\quad\mbox{for all}\quad x\in \Omega.
			\enn
			%is given by
			%\begin{align*}
			%\begin{aligned}
			%\widetilde{w}(x_{1},x_{2})=
			%\begin{cases}
			%w(x_{1},x_{2}),\ (x_{1},x_{2})\in\Omega^{+}\cup \widetilde{L}\\
			%\widetilde{\mathcal{D}}w(x_{1},-x_{2}),\ (x_{1},x_{2})\in\Omega^{-}
			%\end{cases}
			%\end{aligned}
			%\end{align*}
			%where $\widetilde{\mathcal{D}}$ is given by \eqref{Definition of D0}.
			Inserting \eqref{Definition of w} into the second identity of the previous relations, we get
			\begin{align}\nonumber
				u(x_{1},-x_{2})&=(\widetilde{\mathcal{D}}u)(x)-k^{2}\int\limits_{\Omega^{+}}
				\widetilde{\mathcal{D}}_{x}G_{0}(x;y)\,u(y)\D y +k^{2}\int\limits_{\Omega^{+}}G_{0}(x_{1},-x_{2};y)u(y)\D y\\ \label{Definition of Dk}
				&:=(\mathcal{D}u)(x)
			\end{align}
			for all $x\in\Omega^{+}$. Since the right hand side of (\ref{Definition of Dk}) depends only on $u|_{\Omega^+}$, the operator $\mathcal{D}$ can be regarded as an extension operator for the Helmholtz equation. This finishes the proof of Theorem \ref{Reflection principle for Helmholtz equation}.
		\end{proof}
		{In the special case that $\lambda\equiv 0$ (Neumann boundary condition), we have $\widetilde{D}=\widetilde{D}_x\equiv I$ and $G_0(x;y)=\Phi(x_1,x_2;y)+\Phi(x_1,-x_2;y)$ for $x=(x_1, x_2)$, $y=(y_1, y_2)\in \R^2_+$. This implies that $\widetilde{D}_x G_0(x;y)=G_0(x_1,-x_2;y)$. Hence, we obtain from (\ref{Definition of Dk}) that $\mathcal{D}u=\widetilde{\mathcal{D}}u=u$.
		}

		\begin{remark} The integral domain $\Omega^+$ in (\ref{Definition of Dk}) can be replaced by any connected domain $K\subset \Omega^+$ containing $x$ such that $\partial K\cap\widetilde{L}\neq\emptyset$. In fact, for such $K$ we can always choose a curve $\chi(t)\subset K$ connecting a point on $\partial K\cap\widetilde{L}$ with $x$, so that $\widetilde{\mathcal{D}}$ can be well defined with the path $\chi(t)$.
			Since $\widetilde{\mathcal{D}}_{x}G_{0}(x;y)=G_{0}(x_1,-x_2;y)$ for all $y\in \Omega^+\backslash\overline{K}$, we have
			\ben
			&&-k^{2}\int\limits_{\Omega^{+}}
			\widetilde{\mathcal{D}}_{x}G_{0}(x;y)\,u(y)\D y +k^{2}\int\limits_{\Omega^{+}}G_{0}(x_{1},-x_{2};y)u(y)\D y \\
			&=& -k^{2}\int\limits_{K}
			\widetilde{\mathcal{D}}_{x}G_{0}(x;y)\,u(y)\D y +k^{2}\int\limits_{K}G_{0}(x_{1},-x_{2};y)u(y)\D y,
			\enn from which the assertion follows.
			{Note that both $\widetilde{\mathcal{D}}_{x}G_{0}(x;y)$ and $G_{0}(x_1,-x_2;y)$ are singular for $y\in \Omega^+$ getting close to the curve $\chi(t)\subset \Omega^+$}.
		\end{remark}
		\begin{remark}
			Theorem \ref{Reflection principle for Helmholtz equation} is valid for all $k, \lambda\in \C$ in two dimensions. However, in 3D the analogue of Theorem \ref{Reflection principle for Helmholtz equation}
			does not hold if $\Omega^+\subset \R^3$ is nonconvex and $k=ia, \lambda=-ic$ with $0<c<a$; see the example constructed in \cite{Diaz_Ludford-Reflection principle}. Under the geometric condition of $\Omega^+$ stated in Remark \ref{remark2.3} (ii), the extension operator takes the same form as the Laplacian equation for all $k, \lambda\in \C$ in $\R^n$ ($n\geq 2$) (see \cite{Diaz_Ludford-Reflection principle}).
		\end{remark}

		As an application of the extension operator, we prove that solutions of the Helmholtz equation in a sector with the Robin boundary condition can be extended to the whole space. To state this result, we denote by $(r,\theta)$ the polar coordinates in $\R^2$.
		Given $\theta_0\in(0, \pi/2]$, set
		\be\label{halfline}
		\Sigma_j:=\{(r,\theta): r>0,\theta\in (j\theta_0, (j+1)\theta_0) \},\quad  L_j:=\{(r,\theta): r>0,\theta= j\theta_0\}
		\en
		for $j=0,1,2,\cdots$. Obviously, we have $\partial \Sigma_j=L_j\cup L_{j+1}$. Suppose that the normal direction $\nu$ at $L_j$ is directed into $\Sigma_j$.
		
		\begin{lemma}\label{Lemma about non-existence of non-parallel lines}
			Let $K\supset \Sigma_0$ be an unbounded domain. Suppose that
			$u$ is a solution to the Helmholtz equation $(\Delta+k^2)u=0$ in $K$ satisfying the impedance boundary condition $\partial_\nu u+i\eta_j u=0$ with the constant $\eta_j\in \C$ on $L_j$ for $j=0,1$. Then $u$ can be analytically extended onto the whole space $\R^2$.
			%\eqref{Helmholtz scattering}-. Then there do not exist two infinite straight half lines $L_{1},L_{2}\in\R^{2}\setminus{D}$ such that $L_{1}$, $L_{2}$ are not parallel and $\PD_{\nu}u+i\lambda u=0$ on $L_{1}\cup L_{2}$.
		\end{lemma}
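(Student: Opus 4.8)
The plan is to extend $u$ by iterating the reflection operator of Theorem~\ref{Reflection principle for Helmholtz equation} across the successive rays $L_0,L_1,L_2,\dots$, so as to fill up the punctured plane $\R^2\setminus\{0\}$, and then to remove the resulting singularity at the origin. Rotating coordinates so that $L_1$ lies on a coordinate axis, the hypothesis $\theta_0\le\pi/2$ guarantees that $\Sigma_0$ lies strictly on one side of the full line carrying $L_1$, so I may take $\Omega^+=\Sigma_0$, $\Omega^-=R_{L_1}\Sigma_0=\Sigma_1$ and $\Omega=\Sigma_0\cup L_1\cup\Sigma_1$ in Theorem~\ref{Reflection principle for Helmholtz equation}. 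Since $u$ obeys $\partial_\nu u+i\eta_1 u=0$ on $L_1$, the theorem yields a solution $\widetilde u$ of $(\Delta+k^2)\widetilde u=0$ on $\Omega$ with $\widetilde u|_{\Sigma_0}=u$; this is the single reflection step, extending the opening angle from $\theta_0$ to $2\theta_0\le\pi$.

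The decisive point --- and the step I expect to be the main obstacle --- is to verify that $\widetilde u$ again satisfies an impedance condition on the \emph{new} outer edge $L_2=R_{L_1}L_0$, with the constant $\eta_0$ inherited from $L_0$, that is $\partial_\nu\widetilde u+i\eta_0\widetilde u=0$ on $L_2$. Because the extension $\mathcal{D}$ is non-local (it is not the pointwise map $u\mapsto u\circ R_{L_1}$), the condition on $L_0$ cannot simply be transported to $L_2$ by invariance of the Helmholtz operator and of the Robin condition under the isometry $R_{L_1}$. To establish it I would use the reflection identity $\widetilde u(R_{L_1}x)=(\mathcal{D}\widetilde u)(x)$, the Helmholtz counterpart of Remark~\ref{remark2.3}(i), together with the path-independence of the extension formula, which lets me evaluate $\mathcal{D}$ along integration paths $\chi$ that hug $L_0$. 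Representing the Cauchy data of $\widetilde u$ on $L_2$ through this formula and substituting $\partial_\nu u+i\eta_0 u=0$ on $L_0$ should collapse the boundary expression to zero. This is precisely the ``path argument'' for composing the non-point-to-point reflections, and all the real work of the lemma is concentrated here.

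Granting this preservation property, the remainder is an induction: reflecting $\Sigma_1$ across $L_2$ extends $\widetilde u$ to $\Sigma_2$ with impedance $\eta_1$ on $L_3$, and continuing (together with the symmetric reflections of $\Sigma_0$ across $L_0$ into negative angles) one obtains extensions to every sector $\Sigma_j$, the impedance on $L_j$ alternating as $\eta_{j\bmod 2}$. After finitely many steps the total angular range exceeds $2\pi$, so the union of the $\Sigma_j$ covers $\R^2\setminus\{0\}$. Each piece is an analytic continuation of the single germ of $u$, and on the open overlaps that appear once the range passes $2\pi$ the competing determinations coincide by unique continuation for solutions of the Helmholtz equation; hence they patch to a single-valued solution on the punctured plane. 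Finally, since $u\in H^1_{loc}$ and an isolated point carries no $H^1$-capacity in $\R^2$, the singularity at the origin is removable, and the resulting field is a (real-analytic) solution of the Helmholtz equation on all of $\R^2$.
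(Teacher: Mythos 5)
The decisive step you flag is indeed where the whole proof lives, but your plan for it aims at the wrong function, and the identity you set out to verify is not available. You want to show that the \emph{extended} function $\widetilde u$ satisfies $\partial_\nu\widetilde u+i\eta_0\widetilde u=0$ on $L_2=R_{L_1}L_0$. On $\Sigma_1$, however, $\widetilde u=\mathcal{D}_1 v$, where $v(x):=u(R_{L_1}x)$ is the pointwise pullback; when $\eta_1\neq 0$ the operator $\mathcal{D}_1$ is not the identity, so $\widetilde u$ differs from $v$ by the non-local corrections in \eqref{Definition of Dk} (path integrals of $u$ plus volume potentials against the half-plane Robin Green's function $G_0$ associated with $L_1$). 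On $L_2$ the Robin combination of these correction terms involves values of $u$ along whole segments crossing the sector, not merely the Cauchy data of $u$ on $L_0$, so substituting $\partial_\nu u+i\eta_0u=0$ on $L_0$ does not collapse it; no constant-coefficient impedance condition for $\widetilde u$ on $L_2$ should be expected (only in the Neumann case $\mathcal{D}_1=I$ does your transport degenerate to the classical even reflection and work). What \emph{is} true --- and is exactly what the paper exploits --- is the statement you dismiss: the isometry invariance does transport the condition, provided it is applied to the pullback $v$ rather than to $\widetilde u$. Since $R_{L_1}$ maps $L_2$ onto $L_0$ and fixes $L_1$, the function $v$ solves the Helmholtz equation in $\Sigma_1$ and satisfies Robin conditions on \emph{both} rays $L_1$ and $L_2$ with constants $\pm\eta_1$ and $\pm\eta_0$ (up to signs from the reversed normal), with no computation of the non-local formula needed. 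The induction is then run on the successively pulled-back functions: extend $v$ across $L_2$ by Theorem \ref{Reflection principle for Helmholtz equation} with an operator $\mathcal{D}_2$, and recover the extension of $u$ through $u=\mathcal{D}_1 v$, i.e.\ $u(x)=\mathcal{D}_1\mathcal{D}_2u(R_{L_1}R_{L_2}x)$ on $\Sigma_2$, and so on, giving $u=\mathcal{D}_1\cdots\mathcal{D}_N u(R_{L_1}\cdots R_{L_N}\,\cdot\,)$ on $\Sigma_N$. Your ``path argument hugging $L_0$'' is a hope, not a proof, and it is trying to establish a false intermediate claim.

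Your endgame has a second gap. Once the angular range passes $2\pi$ you assert that ``the competing determinations coincide by unique continuation,'' but unique continuation only identifies two solutions that are already known to agree on some open set; continuation by reflections around a full turn can a priori exhibit monodromy, and you never produce such an open set of agreement. The paper sidesteps this entirely: it iterates the pullback scheme only until the sectors cover the closed upper half plane, observes that the resulting solution then satisfies a constant impedance condition on the full line $x_2=0$, and performs one final reflection of the half-plane solution across that whole line via Theorem \ref{Reflection principle for Helmholtz equation}. This yields a single-valued entire solution by construction --- no puncture at the origin, no overlapping branches to reconcile, and hence no need for your $H^1$-capacity removability step. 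In summary: your skeleton (iterated sector-by-sector reflection) matches the paper, but the crucial inheritance step is aimed at the extension instead of the pullback and would fail as stated, and the closing single-valuedness argument needs the half-plane device rather than an appeal to unique continuation.
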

		
		\begin{proof} By Theorem \ref{Reflection principle for Helmholtz equation}, $u$ can be analytically extended from $\Sigma_0$ to $\Sigma_1$ by
			\ben
			u(x)=(\mathcal{D}_{1}u)(R_{L_1}x)\quad\mbox{for all}\quad x\in \Sigma_1,
			\enn
			where $\mathcal{D}_{1}$ is the extension operator for the Helmholtz equation corresponding to  boundary condition  $\partial_\nu u+ i\eta_1 u=0$ on $L_1$. Moreover, it holds that $u(R_{L_1}x)=(\mathcal{D}_{1}u)(x)$ for all $x\in \Sigma_0\cup L_1\cup \Sigma_1$. The function $v(x):=u(R_{L_1}x)$ for $x\in\Sigma_1$ satisfies the boundary value problem
			\ben
			\Delta v+k^2v=0\quad\mbox{in}\quad \Sigma_1,\quad \partial_\nu v+i\eta_1 v=0\quad\mbox{on}\quad L_1,\quad \partial_\nu v-i\eta_2 v=0\quad\mbox{on}\quad L_2.
			\enn
			Applying Theorem \ref{Reflection principle for Helmholtz equation} again, we can extend $v$ from $\Sigma_1$ to $\Sigma_2$ by
			\ben
			v(x)=\mathcal{D}_{2} v(R_{L_2}x)=\mathcal{D}_{2}u(R_{L_1}R_{L_2}x)
			\quad\mbox{for all}\quad x\in \Sigma_2,
			\enn
			where $\mathcal{D}_{2}$ is the extension operator corresponding to the boundary operator $\partial_\nu -i\eta_2$.
			Since $u=\mathcal{D}_{1}v$ in $\Sigma_1$, one can extend $u$ from $\Sigma_1$ to $\Sigma_2$ by
			\ben
			u(x)=\mathcal{D}_{1}v(x)=\mathcal{D}_{1}\mathcal{D}_{2}v(R_{L_2}x)
			=\mathcal{D}_{1}\mathcal{D}_{2}u(R_{L_1}R_{L_2}x)
			\enn
			for all $x\in \Sigma_2$. {Repeating this process, we may extend $u$ from $\Sigma_0$ to the upper half plane $x_2\geq0$. In particular, this implies that
				$\partial_\nu u+i\eta_0 u=0$ on $x_2=0$.} Applying Theorem \ref{Reflection principle for Helmholtz equation} again we can extend $u$ from $\R^2_+$ to the whole space $\R^2$.
		\end{proof}

		\begin{remark}\label{rem:3.5}  %Obviously,
			%Lemma \ref{Lemma about non-existence of non-parallel lines} carries over to the Robin boundary conditions with different coefficients, that is,
			%$\partial_\nu u+i\lambda_j u=0$ on $\Gamma_j$ for $j=0,1$.
			The results in Theorem \ref{Reflection principle for Helmholtz equation} and Lemma \ref{Lemma about non-existence of non-parallel lines} are well known in the Dirichlet and Neumann cases. The Dirichlet boundary condition corresponds to the odd reflection $u(R_Lx)=-u(x)$ (i.e., $\mathcal{D}u=-u$), whereas the Neumann boundary condition gives the even reflection $u(R_Lx)=u(x)$  ( i.e., $\mathcal{D}u=u$).
			Hence, Lemma \ref{Lemma about non-existence of non-parallel lines} remains valid under mixed Dirichlet, Neumann and Robin boundary conditions
			imposed on $L_0\cup L_1$.
		\end{remark}
		The following results will be repeatedly used in our uniqueness proof, which follow from
		Theorem \ref{Reflection principle for Helmholtz equation}
		and Lemma \ref{Lemma about non-existence of non-parallel lines} by slight modifications.
		
		\begin{lemma}\label{lem:3.6} Let $\Omega\subset \R^2$ be a bounded connected polygonal domain and suppose that $\lambda(x)$ is a piecewise constant function on $\partial \Omega$, satisfying $\lambda\equiv {\rm Const}$ on the line segment $l\subset \partial \Omega$.
			Denote by $L$ the straight line containing $l$, $R_L(\Omega)$ the symmetric domain of $\Omega$ with respect to $L$ and by $\mathcal{D}$ the constructed extension operator corresponding to the boundary condition on $l$ in
			Theorem \ref{Reflection principle for Helmholtz equation}.
			Consider the Helmholtz equation
			\be\label{eq:robin}
			\Delta u+k^2 u=0\quad\mbox{in}\quad \Omega\cup l\cup U, \qquad \partial_\nu u+i\lambda(x) u=0\quad\mbox{on}\quad \partial \Omega,
			\en
			where $U\cap \Omega=\emptyset$ is an unbounded connected domain with $l\subset \partial U$. We have
			
			(i) The function $v:=\mathcal{D}u$ is a well-defined analytic function in $U\cup R_L(\Omega)$ and satisfies
			\ben
			\Delta v+k^2 v=0\quad\mbox{in}\quad U\cup R_L(\Omega), \quad
			\partial_\nu v+i\lambda(R_Lx)\, v=0 \quad\mbox{on}\quad \partial R_L(\Omega).
			\enn
			Here the normal direction on $\partial R_L(\Omega)$ is obtained by reflecting the counterpart on $\partial \Omega$ with respect to the straight line parallel to $L$ and passing through the origin $(0,0)\in \R^2$.
			
			(ii) The functions $v$ and $u$ can be analytically extended to the whole space under one of the following conditions:
			\begin{itemize}
				\item[(a)] There exists one side  of $\partial R_L(\Omega)$ extending to a whole straight line in $U$. Moreover, one half plane divided by this straight line is a subset of $U$.
				\item[(b)] There exist two neighboring sides of $\partial R_L(\Omega)$, each of them extending to a half-line in $U$. Moreover, the infinite sector formed by these two half lines is a subset of $U$.
			\end{itemize}
			
		\end{lemma}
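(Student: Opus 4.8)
The plan is to obtain (i) directly from the reflection principle of Theorem~\ref{Reflection principle for Helmholtz equation} applied to the flat piece $l$, and to obtain (ii) by applying that principle, together with the sector argument of Lemma~\ref{Lemma about non-existence of non-parallel lines}, a second time across the reflected boundary $\partial R_L(\Omega)$. For (i) I would first choose coordinates so that $L$ is the real axis, with $\Omega$ lying locally on one side of $L$ along $l$ and $U$ on the opposite side (legitimate since $l\subset\partial\Omega\cap\partial U$). As $u$ solves the Helmholtz equation near $l$ and satisfies $\partial_\nu u+i\lambda u=0$ there with $\lambda$ constant, Theorem~\ref{Reflection principle for Helmholtz equation} furnishes the reflected function $v:=\mathcal{D}u$ defined by \eqref{Definition of Dk}. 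The point requiring care is that $v$ is single-valued and analytic on all of $U\cup R_L(\Omega)$: here I would use the flexibility recorded in the Remark following Theorem~\ref{Reflection principle for Helmholtz equation}, namely that the integration set $\Omega^+$ in \eqref{Definition of Dk} may be shrunk to any connected subdomain $K$ meeting $l$ and that the path $\chi$ may be routed inside $K$. For each target point one selects $K$ and $\chi$ so as to avoid the singularities of $\widetilde{\mathcal{D}}_x G_0(x;\cdot)$ and $G_0(x_1,-x_2;\cdot)$, and path-independence of the harmonic extension \eqref{Definition of D0} glues the resulting germs into a single analytic function. That $\Delta v+k^2v=0$ on $U\cup R_L(\Omega)$ is precisely the content of Theorem~\ref{Reflection principle for Helmholtz equation}, while the reflected impedance condition $\partial_\nu v+i\lambda(R_Lx)v=0$ on $\partial R_L(\Omega)$ follows because $R_L$ is an isometry preserving the Helmholtz operator and carrying $\partial\Omega$, its normal, and the coefficient $\lambda$ to their reflected counterparts — a routine chain-rule check once the normal convention in the statement is fixed.

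For (ii)(a), let $l'\subset\partial R_L(\Omega)$ be the distinguished side, $L'\supset l'$ its line, and $H\subset U$ the half-plane. Since $v=\mathcal{D}u$ is given by the analytic formula \eqref{Definition of Dk}, whose only singularities lie along the (reflected) path $\chi\subset\Omega$, and $\chi$ may be routed clear of $L'$, the function $v$ is analytic in a full neighborhood of $L'$; hence $g:=\partial_\nu v+i\lambda' v$ (with $\lambda'$ the constant value of the reflected impedance on $l'$) is real-analytic along $L'$ and vanishes on the segment $l'$, and the identity theorem forces $g\equiv0$ on the connected line $L'$. Thus $v$ solves the Helmholtz equation in $H$ and satisfies the impedance condition on the entire flat boundary $L'=\partial H$, and a second application of Theorem~\ref{Reflection principle for Helmholtz equation} reflects $v$ across $L'$ onto $R_{L'}(H)$. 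Since $H\cup L'\cup R_{L'}(H)=\R^2$, this continues $v$ to the whole plane, the new values agreeing with the old on overlaps by uniqueness of analytic continuation. Case (ii)(b) is identical except that the two neighboring sides are first upgraded to impedance conditions on their full half-lines by the same analyticity argument; Lemma~\ref{Lemma about non-existence of non-parallel lines} then extends $v$ from the sector $\Sigma\subset U$ to all of $\R^2$. In either case $u$, recovered from $v$ by the reflection across $L$, extends to $\R^2$ as well.

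I expect the main obstacle to be the well-definedness and single-valuedness of the non-local function $v=\mathcal{D}u$ on the large set $U\cup R_L(\Omega)$, since the kernels in \eqref{Definition of Dk} are singular along the chosen path $\chi\subset\Omega$ and at its reflection into $R_L(\Omega)$. The work is to show that, for every target point, $K$ and $\chi$ can be chosen to avoid these singularities and that the resulting local definitions patch consistently by path-independence — in particular across the corners of $R_L(\Omega)$, where the piecewise-constant impedance, and hence the relevant reflection data, change. Once this global analyticity is in hand, both parts reduce to bookkeeping with the two reflection principles already established.
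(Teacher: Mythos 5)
Your proposal follows the paper's proof of Lemma \ref{lem:3.6} in all essentials: part (i) is Theorem \ref{Reflection principle for Helmholtz equation} together with the identification $v(x)=u(R_Lx)$ near $l$ (the paper extends $v$ into $R_L(\Omega)$ by analytically continuing $u\circ R_L$, which is also where the reflected impedance condition on $\partial R_L(\Omega)$ comes from), and part (ii) feeds the resulting Robin problem on the half plane, resp.\ the sector, into Theorem \ref{Reflection principle for Helmholtz equation}, resp.\ Lemma \ref{Lemma about non-existence of non-parallel lines}, exactly as you do. The one place you are more explicit than the paper — upgrading the impedance identity from the side $l'$ to the whole line $L'$ (or half-lines) by real-analyticity of $v$ near $L'$ and the identity theorem — is precisely the step the paper leaves implicit when it asserts that $v$ ``satisfies the Robin boundary condition on the boundary,'' so your write-up matches, and if anything slightly sharpens, the paper's argument.
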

		
		\begin{proof}
			(i) By Theorem \ref{Reflection principle for Helmholtz equation}, $v$ satisfies the Helmholz equation in $U$ and $u(R_Lx)=\mathcal{D}u(x)$ at least for $x\in U\cup \Omega$ close to the interface $l$. Hence, $v(x)=u(R_L x)$ for $x\in U$ near $l$. Since the function $x\rightarrow u(R_L x)$ can be extended to $R_L(\Omega)$, $v$ can also be extended from $U$ to $R_L(\Omega)$, if $R_L(\Omega)$ is not a subset of $U$. In the case $R_L(\Omega)\subset U$, the assertion follows directly from Theorem \ref{Reflection principle for Helmholtz equation}.
			
			(ii) In case (a), $v$ is a solution to the Helmholtz equation in an unbounded domain containing the half plane and satisfies the Robin boundary condition on the boundary. Applying Theorem \ref{Reflection principle for Helmholtz equation}, $v$ can be analytically extended onto $\R^2$. The global extension also applies to $u$, since $u(x)=v(R_L x)$ for $x$ near $l$. The case (b) can be proved analogously using Lemma \ref{Lemma about non-existence of non-parallel lines}.
		\end{proof}
		
		In our applications of Lemma \ref{lem:3.6} (see Case (ii) in the proof of Theorem \ref{Main theorem} below), the polygon $\Omega$ is taken as the gap domain between two different obstacles, $u$ the total field corresponding to one of these two obstacles, $\Omega\cup l\cup U$ the complement of this obstacle in $\R^2$ and $l$ a Robin level set of $u$. We refer to Figure \ref{f1} for two simple examples corresponding to cases (a) and (b) in Lemma \ref{lem:3.6} (ii).
		In (\ref{eq:robin}) we neglect the normal directions, since $\lambda$ can be taken as an arbitrary constant on each side of $\partial \Omega$.

		% may not hold in $\R^{3}$. Let $\Omega:=\R^{3}\setminus{\{(a,0,0),(-a,0,0)\}}$ for some $a>0$, $\Gamma:=\{(x,y,z)\in\R^{3}:\ x=0\}$ and $\Omega^{\pm}=\{(x,y,z)\in\R^{3}:\ \pm x>0  \ \mbox{and} \ \pm x\neq a\}$. Now let
		%	\begin{align*}
		%	\begin{aligned}
		%	u(x,y,z)=G(x-a)+G(x+a)+2ke^{-ai\lambda}\int\limits_{\infty}^{-a}e^{-i\lambda x_{0}}G(x-x_{0})\D x_{0},
		%	\end{aligned}
		%	\end{align*}
		%where \[G(x)=G(x,y,z)=\frac{e^{-k r}}{r}, \ r^{2}=x^{2}+y^{2}+z^{2}.\]
		%\end{remark}
		%Then as shown in \cite{Diaz_Ludford-Reflection principle}, that $u(x,y,z)$ as defined above satisfies the following impedance boundary condition
		%\begin{align*}
		%\begin{aligned}
		%\begin{cases}
		%\Delta u(x,y,z)+k^{2}u(x,y,z)=0,\\
		%\frac{\PD}{\PD x}u(0,y,z)+i\lambda u(0,y,z)=0, \ \mbox{for}\ y,z\in\R.
		%\end{cases}
		%\end{aligned}
		%\end{align*}
		%Now again it is proved in \cite{Diaz_Ludford-Reflection principle} that $u(x,y,z)$ cannot be extended to whole of $\Omega$.
		\section{Proof of main results}\label{sec:4}

		To prove Theorem \ref{Main theorem},
		we shall combine the arguments in \cite{Elshner_Yamamoto_polygonal} for treating the Neumann boundary condition and those in elasticity \cite{Elschner_Guang-hui_Inverse_elastic_scattering} where a \textquoteleft non-point-to-point\textquoteright \ reflection principle for the Navier equation was applied to determine a connected rigid polygon. {In contrast with the Dirichlet case \cite{Rondi05,CY,LHZ06, Elschner_Guang-hui_Inverse_elastic_scattering}, the Robin level set (a curve on which the Robin boundary condition is satisfied) of the total field can be unbounded}. We will adapt the arguments of \cite{Elshner_Yamamoto_polygonal} by investigating two Robin half-lines starting from a corner point in our uniqueness proof.

		\subsection{Proof of Theorem \ref{Main theorem}}
		Theorem \ref{Main theorem} will be proved by contradiction argument. We only need to consider the determination of the shape, because the recovery of the Robin coefficients follows directly from Holmgren's uniqueness theorem.
		Suppose that $D_{1}\neq D_{2}$, but the corresponding far-field patterns are identical, i.e.,
		$u_{1}^{\infty}(\widehat{x},d)=u_{2}^{\infty}(\widehat{x},d)$  for all $\widehat{x}\in \Sb$ and for fixed $d \in \Sb$.
		Using  the Rellich's lemma, we have that
		\begin{equation}\label{Equality of total field}
			u_{1}=u_{2}\   \mbox{in the unbounded component $E$ of $\R^{2}\setminus{\overline{D_{1}\cup D_{2}}}$}.
		\end{equation}
		Since both $D_1$ and $D_2$ are polygons, using (\ref{Equality of total field}) {and interchanging $u_1$ and $u_2$ if necessary},
		we only need to consider the following cases
		(see \cite[Lemma 7]{Elshner_Yamamoto_polygonal} for the details):
		
		\begin{description}
			\item[Case (i)]
			There are two half-lines $L_0$ and $L_1$ starting from a corner point $O\in  \partial D_1\cap D_2^c\cap \partial E$ such that $\partial_\nu u_2+i \lambda_2 u_2=0$ on $L_0\cup L_1$; see Figure \ref{f2}.
			Here $D_j^c:=\R^2\backslash\overline{D}_j$ for $j=1,2$ and we may neglect the normal directions at $L_0$ and $L_1$.
		\end{description}
		
		\begin{figure}[htbp]
			\centering
			\includegraphics[width=5.5cm,height=4cm]{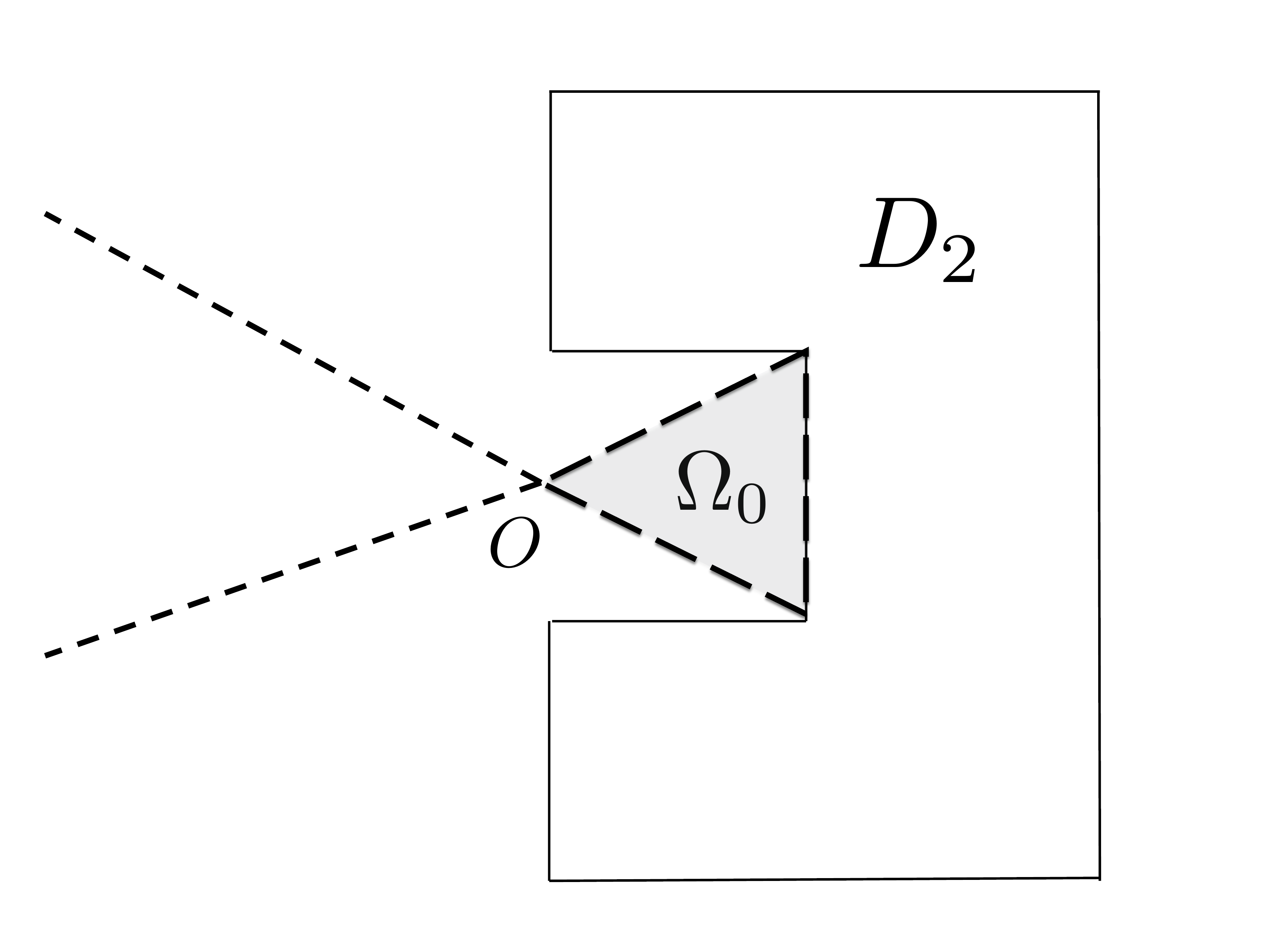}
			\caption{Illustration of two different polygonal obstacles $D_2$ and $D_1$, where $\Omega_0$ denotes the gap domain between $D_1$ and $D_2$. There are two sides of $D_1$ around the corner $O$, each of them can be extended to a half-line in $D_2^c$. }
			\label{f2}
		\end{figure}

		Since the Helmholtz equation is rotational invariant, we may suppose that the corner $O$ is located at the origin and that the half-lines $L_0$ and $L_1$ coincide with those defined in (\ref{halfline}). By the connectness of $D_2$,
		the infinite sector $\Sigma_0$ formed by $L_0\cup L_1$ must lie completely in $E\cap D_2^c$.
		This implies that the total field $u_2=\exp(ikx\cdot d)+u^{sc}_2$ is a solution to the Helmholtz equation in a neighboring area of the sector $\Sigma_0$. Applying Lemma \ref{Lemma about non-existence of non-parallel lines}, we can extend $u_2$ and thus $u_2^{sc}$ to the whole space, which gives  $u_2^{sc}\equiv 0$. Consequently, $u_2(x)=e^{ikx\cdot d}$ fulfills the Robin boundary condition on $\partial D_2$. It then follows that
		\be\label{eq:plane}
		\nu(x)\cdot d+\lambda/k =0 \quad\mbox{for all}\quad x\in \partial D_2
		\en
		with fixed $d\in\Sb$ and $k,\lambda>0$. However,
		this is impossible, since the boundary $\partial D_2$ has at least three unit normal directions $\nu^{(j)}\in\Sb$ ($j=1,2,3$) pointing into $D_2^c$ such that $\nu^{(2)}-\nu^{(1)}$ and  $\nu^{(3)}-\nu^{(1)}$ are linearly independent.
		
		\begin{description}
			\item[Case (ii)]
			There exists a finite line segment $l_0\subset E\cap D_2^c$ with both the end points lying on $\partial D_2\cap \partial E$ such that $\partial_\nu u_2+i \lambda_2 u_2=0$ on $l_0$; see Figure \ref{f1}. %where $\nu$ is directed into the unbounded component of
			%$D_{2}^{c}\setminus{l_{0}}$.
		\end{description}
		
		\begin{figure}[htbp]
			\centering 
			\includegraphics[width=5.5cm,height=4cm]{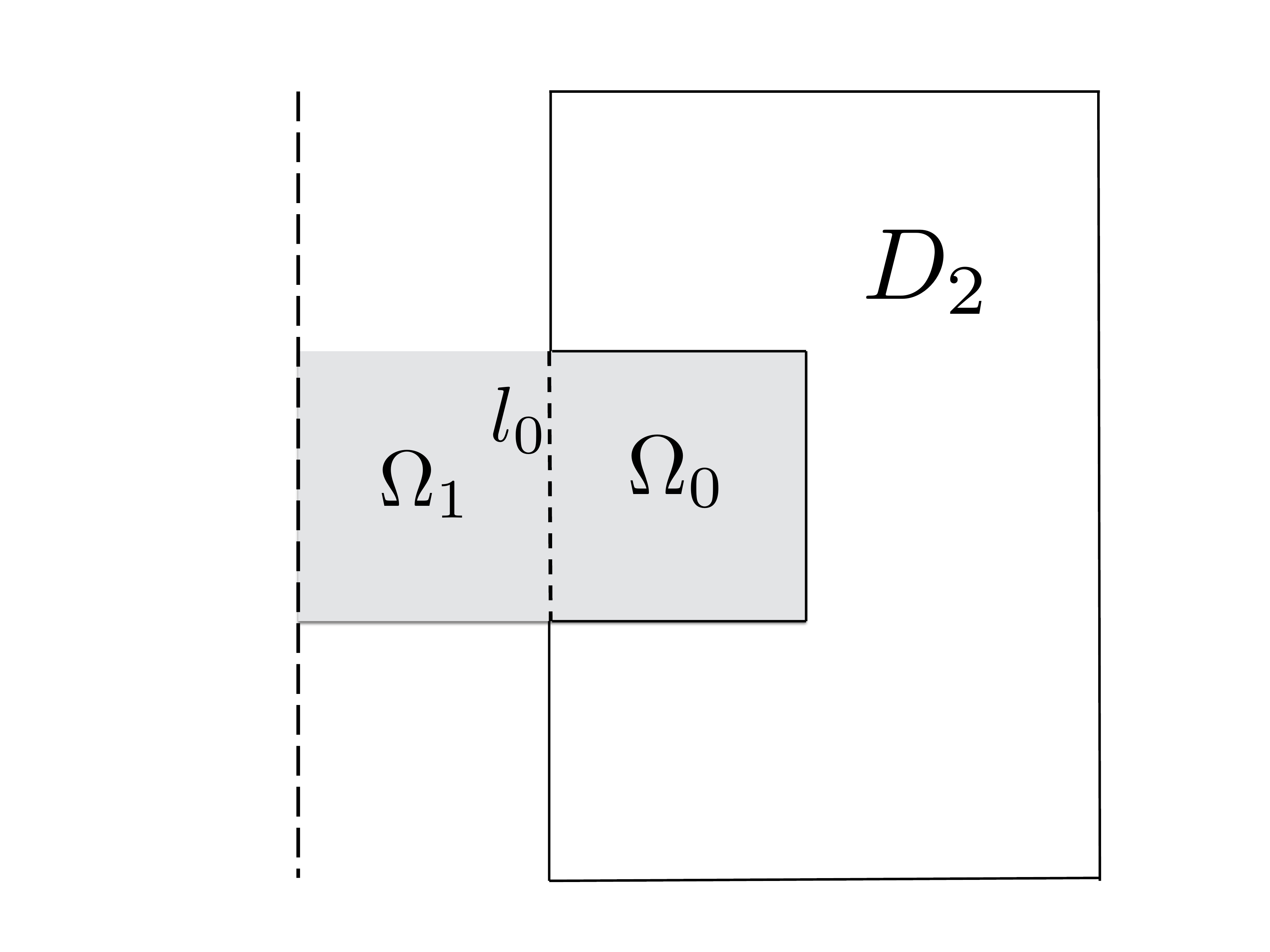}\quad
			\includegraphics[width=5.5cm,height=4cm]{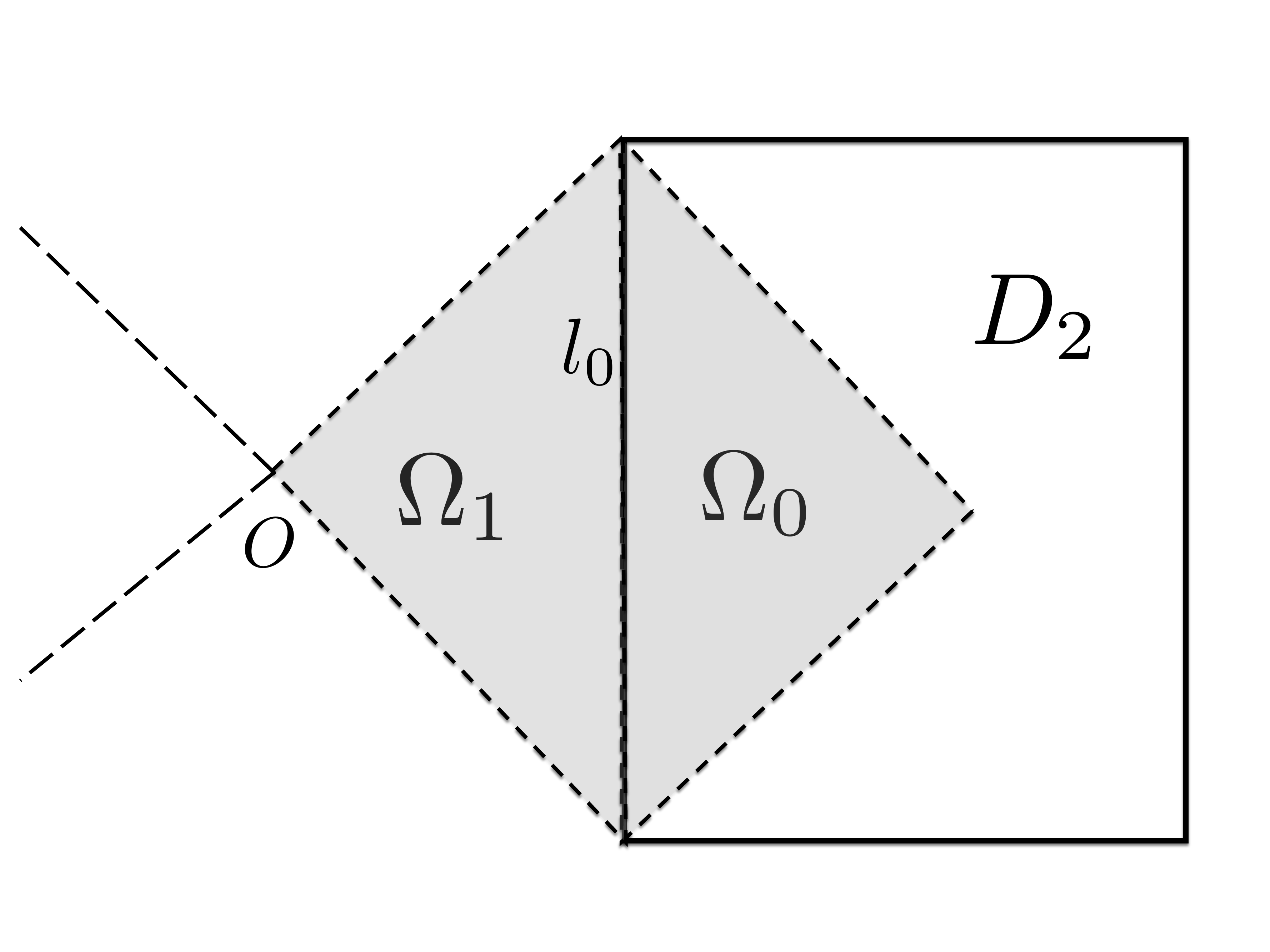}
			\caption{Illustration of two different polygonal obstacles $D_2$ and $D_1$, where $\Omega_0$ denotes the gap domain between $D_1$ and $D_2$.
				$\Omega_1$ denotes the reflection of $\Omega_0$ with respect to $l_0$. Left: One side of $\partial \Omega_1$ extends to a whole straight line in $D_2^c$. Right: Two neighboring sides of $\partial\Omega_2$ extend to half-lines in $D_2^c$ starting from the corner $O$, forming an infinite sector in $D_2^c$.
			}\label{f1}
		\end{figure}

		In Case (ii), we shall apply the approach of \cite{Elschner_Guang-hui_Inverse_elastic_scattering} to prove that the total field $u_2$ can be analytically extended onto $\R^2$, leading to the same contradiction as in Case (i).
		Choose a point  $P_{0} \in l_{0}$ and  a continuous injective path $\gamma(t)$, $t\geq 0$ which starts from $\gamma(0)=P_{0}$ and connects to infinity  in the unbounded component of $D_{2}^{c}\setminus{l_{0}}$. Denote by $\Omega_{0}\subset D_{2}^{c}$ the bounded connected component of $D_{2}^{c}\setminus{l_{0}}$. Since $\PD \Omega_{0}\subset \PD D_{2}\cup l_{0}$, $u_{2}$ is solution to
		\begin{align*}
			\Delta u_{2}+k^{2}u_{2}=0\ \mbox{in}\ \Omega_{0},\qquad \ \mathcal{B}_{0}u_{2}=0\ \mbox{on}\ \PD \Omega_{0},
		\end{align*}
		where $\mathcal{B}_{0}:=\partial_\nu+i\lambda_0(x)$ is a Robin boundary operator with some piecewise constant function $\lambda_0$ defined on $\partial \Omega_0$.
		Note that $\lambda_0\equiv {\rm Const}$  on any side of $\partial \Omega_0$ not containing $l_0$ and may be piecewise constant on the side of $\partial \Omega_0$ containing $l_0$.
		
		We proceed with several notations to be used below. Denote by $R_{n}$  $(n\geq 0)$ the reflection with respect to  the straight line $L_{n}$ containing the line segment $l_{n}$. %and by $R_{n}^{0}$ the reflection with respect to the straight line $L_{n}^{0}$ that is parallel to $L_{n}$ and contains the origin $(0,0)\in \R^{2}$.
		Let $\mathcal{D}_{n}$ denote the extension operator
		for the Helmholtz equation with  respect to the line segment $l_{n}\subset L_{n}$ and subject to the Robin boundary operator $\mathcal{B}_{n}$ on $l_{n}$.  This can be obtained from Equation \eqref{Definition of Dk} after translation and rotation.
		
		Applying Lemma \ref{lem:3.6}, the function $u^{(0)}_{2}:=u_{2}$ can be extended to the symmetric domain $\Omega_{1}:=R_{0}\lb \Omega_{0}\rb$ of $\Omega_0$ with respect to $L_0$ by the extension operator $\mathcal{D}_{0}$. Setting
		$u_{2}^{(1)}(x):=\mathcal{D}_{0}u_{2}^{(0)}(x)$
		in $\Omega_1\cup D_2^c$, we find that
		$u_{2}^{(1)}(x)=u_{2}^{(0)}(R_0x)$ in $\Omega_1$ and it
		satisfies the boundary value problem
		\begin{align*}
			\Delta u_{2}^{(1)}+k^{2}u_{2}^{(1)}=0\ \mbox{in}\ \Omega_{1}, \qquad \mathcal{B}_{1}u_{2}^{(1)}=0\; \mbox{on}\ \PD \Omega_{1},
		\end{align*}
		for some Robin boundary operator $\mathcal{B}_{1}$.
		By Lemma \ref{lem:3.6}, the Robin coefficient is piecewise constant on $\partial \Omega_1$.
		%Now since $\Omega_{0}\subset D_{2}^{c}$ therefore the function $u^{(0)}_{2}:=u_{2}|_{\Omega_{0}},$ can be analytically extended into $D_{2}^{c}\setminus{\Omega_{0}}$ across the line segment $l_{0}$.
		%Now if we denote by $\Omega_{1}:=R_{0}\lb \Omega_{0}\rb$ then  we have that the extension of  function $u_{2}^{(0)}$ to $D_{2}^{c}\setminus{\Omega_{0}}$ is well-defined near the path $\gamma(t)$ in $\Omega_{1}$.  Now using Theorem \ref{Reflection principle for Helmholtz equation}, we have that the function $u_{2}^{(1)}:=\mathcal{D}_{0}u_{2}(x)$ for $x\in D_{2}^{c}$ satisfies the following boundary value problem
		%\begin{align*}
		%\begin{aligned}
		%\Delta u_{2}^{(1)}+k^{2}u_{2}^{(1)}=0,\ \mbox{in}\ \Omega_{1}, \ \mathcal{B}_{1}u_{2}^{(1)}=0,\ \mbox{on}\ \PD \Omega_{1}.
		%\end{aligned}
		%\end{align*}
		Since $\Omega_{1}$ is bounded,  we have $\PD \Omega_{1}\cap \{\gamma(t):\ t>0\}\neq \phi$. Set $t_{1}:=\sup \{t:\ \Omega_{1}\cap \gamma(t)\neq \phi\}$. By \cite{Elschner_Guang-hui_Inverse_elastic_scattering} we can assume without loss of generality that $P_{1}:=\gamma(t_{1})$ is not a corner point of $\PD \Omega_{1}$. Using the continuity and injectivity of path $\gamma(t)$, we have $P_{1}\neq P_{0}$ and $\Omega_{1}\cap \{\gamma(t):\ t>t_{1}\}=\phi$. Let $l_{1}\subset \PD \Omega_{1}$ denote the line  segment containing point $P_{1}$ and define $\Omega_{2}:=R_{1}\lb \Omega_{1}\rb$. Using Lemma \ref{lem:3.6} and repeating the previous step, we can define the function $u_{2}^{(2)}(x):=\mathcal{D}_{1}u_{2}^{(1)}(x)$ for $x\in \Omega_{2}\cup D_2^c$. Then we have
		$u_{2}^{(2)}=\mathcal{D}_{1}\mathcal{D}_{0}u_2$
		in $D_2^c$ and $u_{2}^{(2)}(x)=u_{2}^{(1)}(R_1x)$ for $x\in \Omega_2$.
		Hence, it is a solution to the boundary value problem
		\begin{align*}
			\Delta u_{2}^{(2)}+k^{2}u_{2}^{(2)}=0 \quad \mbox{in} \ \Omega_{2},\qquad \mathcal{B}_{2}u_{2}^{(2)}=0\quad \mbox{on}\ \PD \Omega_{2}.
		\end{align*}
		Following the previous argument, we choose a point $P_{2}:=\gamma(t_{2})\neq P_{1}$ for some $t_{2}>t_{1}$ and a line segment $l_{2}\subset \PD \Omega_{2}$ such that $P_{2}\in l_{2}$ and $\Omega_{2}\cap \{\gamma(t):\ t>t_{2}\}=\phi$.
		In general, we can find a polygonal domain $\Omega_{N}:=[R_{N-1}R_{N-2}\cdots R_1](\Omega_1)$, $N\geq 1$ and a function
		\begin{align*}
			u_{2}^{(N)}(x):= [\mathcal{D}_{N-1}\mathcal{D}_{N-2}\cdots \mathcal{D}_{0}u_{2}](x),\quad x\in D_{2}^{c}\cup\Omega_N
		\end{align*}
		such that
		\begin{align}\label{Equation for uN}
			\Delta u_{2}^{(N)}+k^{2}u_{2}^{(N)}=0\quad \mbox{in}\quad \Omega_{N},\qquad \mathcal{B}_{N}u^{(N)}_{2}=0\quad \mbox{on}  \quad \PD \Omega_{N}.
		\end{align}
		To proceed, we suppose that $\{\gamma(t):\ t>t_{N}\}\cap \Omega_{N}=\phi$ for some $t_{N}>t_{N-1}$ and that $P_{N}=\gamma(t_{N})\in l_{N}$ where $l_{N}\subset\PD \Omega_{N}$ is a line segment. Since the path $\gamma(t)$ is connected to infinity in $D_{2}^{c}$, by  \cite[Lemma 3.3]{Elschner_Guang-hui_Inverse_elastic_scattering} we have $t_N\rightarrow\infty$ as $N\rightarrow\infty$.
		After a finite number of steps,
		we have that either $L_{N}$ lies complectly in $D^c_{2}$ or $l_{N}$ together with one of its neighbouring side of $\partial \Omega_N$ extends to two half lines in $D_2^c$. Using the connectness of $D_2$ and Lemma \ref{lem:3.6} (ii), we can extend $u_2^{(N)}$ to the whole space. %Since  $u_{2}^{(N)}$ is an analytic function in $D_2^c$, it satisfies the Helmholtz equation on one side of $L_N$ which does not contain $D_2$ and subject to a Robin boundary condition on $L_N$.
		% Applying Theorem \ref{Reflection principle for Helmholtz equation},  $u^{(N)}_{2}$ can be extended analytically to whole of $\R^{2}$.
		This implies that $u^{(N-1)}_{2}$,  $u^{(N-2)}_{2}$, $\cdots$,
		$u_{2}^{(0)}=u_2$ can also be extended to $\R^{2}$, which is impossible.
		\hfill$\Box$
		%Since $u^{(N-1)}_{2}=\mathcal{D}_{N-1}u_{2}^{(N)}(x)$ for $x\in D_{2}^{c}$, we get that $u_{2}^{(N-1)}$ can also be extended analytically to $\R^{2}$. Repeating this process, we see that the total field
		%Thus,  the total field $u_{2}=u_{2}^{in}+u_{2}^{sc}$ can be extended analytically to the whole space,
		%A similar contradiction foll which gives us that $u_{2}^{sc}\equiv 0$. Now using the impedance boundary condition for $u_{2}$ on $\PD D_{2}$, we get
		%\begin{align*}
		%\begin{aligned}
		%\nu(x)\cdot d+\lambda =0, \ \mbox{for any $\nu(x)$ unit outward normal to $\PD D_{2}$ at $x\in\PD D_{2}$ and for fixed $d\in\Sb$, $\lambda>0$.}
		%\end{aligned}
		%\end{align*}
		%This is impossible. Hence we have that $D_{1}=D_{2}$.
		% This complete the proof of Theorem \ref{Main theorem}.
		
		\subsection{Proof of Theorem \ref{Th2}}
		If $\lambda=\lambda(x)$ is the restriction of a holomorphic function over $\C^2$ to $\R^2$, the extension formula (\ref{Definition of D0}) for the harmonic equation
		should be replaced by (\ref{eH}) with $\alpha(z,w)\equiv 1$, $\beta=i\lambda(z,w)$ over $\C^2$ and $S(\tau)=\widetilde{S}(\tau)=\tau$ for $\tau\in \C$.
		Existence of the non-local reflection principle for the Helmholtz equation in Theorem \ref{Reflection principle for Helmholtz equation} can be established in the same manner.
		Lemma \ref{Lemma about non-existence of non-parallel lines} carries over to the case that $\eta_j$ ($j=0,1$) are the restriction of holomorphic functions over $\C^2$ to $\R^2$, and Lemma \ref{lem:3.6} still holds true for impedance coefficients that are piecewise holomorphic on the boundary of a polygon. Arguing the same as in the proof of Theorem
		\ref{Main theorem}, we can arrive at the same contradiction if $D_1\neq D_2$.
		
		To prove $\lambda_1(x)=\lambda_2(x)$ on $\partial D$, where $D=D_1=D_2$, we observe that $u_1=u_2=:u$ in $D^c:=\R^2\backslash\overline{D}$ if the corresponding far-field patterns are identical over all observation directions. This yields the coincidence of the Cauchy data of $u_1$ and $u_2$  on $\partial D$. Hence,
		\be\label{eq:impedance}
		0=(\partial_\nu-i\lambda_1(x)) u_1-(\partial_\nu-i\lambda_2(x)) u_2=-i[\lambda_1(x)-\lambda_2(x)]u(x)\quad\mbox{on}\quad \partial D.
		\en
		If $\lambda_1(x_0)\neq\lambda_2(x_0)$ at some $x_0\in \partial D$, by the continuity of $\lambda_j$, there exists a neighborhood of $x_0$ at $\partial D$ such that they are not identical there. Using (\ref{eq:impedance}), we get the vanishing of $u=u_1=u_2$ in an open set of $\partial D$. In view of the impedance boundary condition of $u_j$, we also get the vanishing of the Neumann data on this open set. Now the Holmgren's  theorem gives $u\equiv 0$ in $D^c$ which is impossible in the area far away from $D$.

		\hfill$\Box$
		
		We end up the paper with several remarks.
		\begin{remark}\label{rem}
			Theorem \ref{Th2} holds true even if the impedance coefficient $\lambda(x)$ is piecewise  holomorphic in the sense that the restriction of $\lambda$ to each side of $\partial D$ extends to an entire holomorphic function defined on $\C^2$. In particular, $\lambda(x)$ can be a piecewise-constant function whose values remain the same on each side of $\partial D$.
			For convex polygons, Lemma \ref{Lemma about non-existence of non-parallel lines} and the arguments in the first case of the proof of Theorem
			\ref{Main theorem} are sufficient to imply uniqueness. For non-convex polygons, Lemma \ref{lem:3.6} should be applied to handle the second case.  Further, one can generalize the uniqueness result of Theorem \ref{Th2} to the case of mixed Dirichlet, Neumann and Robin boundary conditions. On each side of the polygon, there should be only one type of these boundary conditions.
		\end{remark}
		
		\begin{remark}
			Our uniqueness proofs to Theorems \ref{Main theorem} and \ref{Th2} imply that the total field cannot be real-analytic around each corner lying on the convex hull of a polygonal obstacle. The proof follows from the same uniqueness argument for identifying convex polygons. The \textquoteleft singularity\textquoteright \  of the total field at corner points might be helpful in designing numerical schemes for imaging a polygonal obstacle (see \cite{Elschner_Guang-hui_Inverse_elastic_scattering,HMY}).
		\end{remark}
		
		\begin{remark}
			The uniqueness results are valid for other form of non-vanishing incoming waves $u^{in}$ that are solutions to the Helmholtz equation in a neighboring area of $D$. In place of using (\ref{eq:plane}) for a plane wave, one needs to consider the Robin boundary value problem
			\ben
			\Delta u^{in}+k^2 u^{in}=0\quad\mbox{in}\quad D,\qquad \partial_\nu u^{in} +i\lambda(x)u^{in}=0\quad\mbox{on}\quad\partial D,
			\enn where the normal is directed into outward. Applying integration by part, it is easy to prove $u^{in}\equiv 0$ in $\R^2$ if $\lambda(x)\geq 0$ on $\partial D$ and  $\lambda(x)\geq \lambda_0>0$ on an open set of $\partial D$.
		\end{remark}
		
		\section{Acknowledgements}
	The authors would like to thank Johannes Elschner and Yubiao Zhang for their comments and suggestions which help improve the original version of this manuscript. M. Vashisth is supported by the NSAF grant (No. U1930402).

	\end{document}